\definecolor{darkred}{rgb}{0.8,0,0}
\newcommand{\captionfonts}{\footnotesize}
\long\def\@makecaption#1#2{%
  \vskip\abovecaptionskip
  \sbox\@tempboxa{{\captionfonts #1: #2}}%
  \ifdim \wd\@tempboxa >\hsize
    {\captionfonts #1: #2\par}
  \else
    \hbox to\hsize{\hfil\box\@tempboxa\hfil}%
  \fi
  \vskip\belowcaptionskip}
\newcommand{\nwc}{\newcommand}
\newtheorem{prop}{Proposition}[section]
\newtheorem{lemma}[prop]{Lemma}
\newtheorem{theorem}[prop]{Theorem}
\newtheorem{corollary}[prop]{Corollary}
\newtheorem{definition}[prop]{Definition}
\nwc{\R}{\mathbb R}
\nwc{\Z}{\mathbb Z}
\nwc{\N}{\mathbb N}
\newcommand{\ignore}[1]{}
\nwc{\eps}{\varepsilon}
\nwc{\re}{Re\,}
\nwc{\wto}{\rightharpoonup}
\nwc{\ds}{\displaystyle}
\newcommand {\bedis} {\begin{displaymath}}
\newcommand {\edis} {\end{displaymath}}
\newcommand{\newbeqna} {\renewcommand {\arraystretch} {2}
                        \begin {displaymath} \begin {array}{crcl}}
\newcommand{\neweqna}{\end{array} \end {displaymath}}
\newcommand{\fbeqna}{\renewcommand {\arraystretch} {1.3}
\begin {displaymath}\begin{array}{rcll}}
\newcommand{\feqna}{\end{array}\end{displaymath}}
\newcommand {\beqna} {\begin{eqnarray*}}
\newcommand {\eqna} {\end{eqnarray*}}
\newcommand {\beqn} {\begin{eqnarray}}
\newcommand {\eqn} {\end{eqnarray}}
\newcommand{\p}{\varphi}
\begin{document}
\title{Oscillatory dynamics in 
Smoluchowski's coagulation equation with 
diagonal kernel
}

\author{
Ph. Lauren\c{c}ot
\and
B. Niethammer%
\and
J. J. L. Vel\'{a}zquez
}
\maketitle

\begin{abstract}

We characterize the long-time behaviour of solutions to Smoluchowski's coagulation
equation with a diagonal kernel of homogeneity $\gamma < 1$.
Due to the property of the diagonal kernel, the value of a solution at a given cluster size depends only on 
a discrete set of points. As a consequence, the long-time behaviour of solutions
is in general periodic, oscillating between different rescaled versions of a self-similar solution.
Immediate consequences of our result are a characterization of the set of data for which the solution
converges to self-similar form and a uniqueness result for self-similar profiles.

\end{abstract}

\section{Introduction}
\label{S.intro}

Smoluchowski's classical coagulation equation provides a mean-field description of
binary coalescence of clusters. 
If $\xi$ denotes the size of a cluster and $F(\tau,\xi)$ the corresponding number density
at time $\tau$  then the equation is given by
\begin{equation}\label{Smolu1}
 \begin{split}
\partial_{\tau}F(\tau,\xi) & =\frac{1}{2}\int_{0}^{\xi}K(\xi{-}\eta,\eta)  F(\tau,\xi{-}\eta)
F( \tau,\eta)  d\eta\\
&\qquad -\int_{0}^{\infty}K(  \xi,\eta)  F(\tau,\xi)  F(\tau,\eta)  d\eta\,, \qquad (\tau,\xi)\in (0,\infty)^2 \,,
\end{split}
\end{equation}
where the coagulation kernel $K=K(\xi,\eta)\ge 0$ describes the rate at which clusters of size $\xi$ and $\eta$ coagulate.
This model has been used in various applications, most prominently in aerosol physics or
polymerization, but also in astrophysics or population dynamics \cite{Aldous99, Drake72, Ram00}.

We are in the following interested in kernels that grow at most sublinearly at infinity. In this case it is well-known for 
a large class of kernels that, if the initial condition $F(0)$ has finite mass, i.e. finite first moment, then the solution $F$ to \eqref{Smolu1} conserves the mass for all positive times, that is
\begin{equation}\label{S1E2}
 \int_0^{\infty} \xi F(\tau,\xi)\,d\xi = \int_0^{\infty} \xi F(0,\xi)\,d\xi\, \qquad \mbox{ for all } \tau >0\,.
\end{equation}

A topic of  particular interest in the theory of coagulation is whether  solutions to \eqref{Smolu1}
evolve for large times in a self-similar fashion. This is however only well-understood for the so-called solvable
kernels $K(\xi,\eta)=const., K(\xi,\eta)=\xi+\eta$ and $K(\xi,\eta)=\xi\eta$, since in this case equation \eqref{Smolu1} can be solved explicitly by transform methods. A complete characterization of the long-time behaviour of solutions to \eqref{Smolu1} with solvable kernels can be found in \cite{MePe04}.

For all other kernels there are so far only numerical studies available \cite{FilLau04, Lee01} that suggest for a range of kernels
convergence to self-similar form in the long-time limit. In subsequent years several results established also rigorously the existence of self-similar
solutions, both with finite mass and with fat tails \cite{EMR05, FouLau05, Leyvraz05, NTV16, NV12a}, but an analysis of the long-time behaviour of solutions is still elusive.

In this paper we present the first rigorous analysis of the long-time behaviour of solutions for a non-solvable kernel. More precisely, we study the so-called diagonal kernel,
that is  in some sense orthogonal to the constant one. It is  given
 by
$$
K\left(  \xi,\eta\right)  =\xi^{1+\gamma}\delta\left(  \xi{-}\eta\right)\ , \qquad (\xi,\eta)\in (0,\infty)^2\, 
\ ,\qquad \gamma<1\,,
$$
where $\delta$ is the Dirac mass at zero, that is, in this model, only clusters of the same size can coagulate. 
Then  equation \eqref{Smolu1} reduces to
\begin{equation}
\partial_{\tau}F\left(  \tau,\xi\right)  =\frac{\xi^{1+\gamma}}{2^{3+\gamma}%
}  F^2\left(  \tau,\xi/2\right) -\xi^{1+\gamma
} F^2\left(  \tau,\xi\right) \,, \qquad (\tau,\xi)\in (0,\infty)^2\,\,.\label{S1E1}%
\end{equation}
Obviously a key difference to continuous kernels is the fact that, for the diagonal one, the evolution of the number density $F$ of clusters with mass $\xi$ depends only on smaller values and not on larger ones. As a consequence, the equation obeys a maximum principle, a fact that we will also use in our analysis.
Another key feature is that the evolution in $\xi$ depends only on a discrete set of values of the form $\xi/2^k$ and thus the evolution decouples
in a certain sense which leads to a long-time behaviour of solutions that is different from what one expects for continuous kernels.
We will explain this in more detail in Section~\ref{Ss.results} below.

\section{Main results}\label{S.results}

\subsection{Reformulation of the problem}

It turns out to be more convenient to go over to the function
\[
 G(\tau,\eta)=2^{\eta(1{+}\gamma)} F(\tau,\xi) \qquad \mbox{ with } \xi=2^{\eta}\,,\; \eta \in \R\ , \mbox{ and } \tau>0\,.
\]
Then $G$ satisfies
\begin{equation}\label{Gequation}
 \partial_{\tau} G(\tau,\eta) = \frac{1}{2^{1{-}\gamma}} G^2(\tau,\eta{-}1) - G^2(\tau,\eta)\,, \qquad (\tau,\eta)\in (0,\infty)\times\R\,,
\end{equation}
and
\begin{equation}\label{Gmass}
 \int_{\R} 2^{(1{-}\gamma)\eta} G(\tau,\eta)\,d\eta = \int_{\R} 2^{(1{-}\gamma)\eta} G(0,\eta)\,d\eta\,\qquad \mbox{ for all } \tau >0\,.
\end{equation}
Well-posedness of \eqref{Gequation} for initial data in $L^{\infty}(\R)$ will be shown later for a reformulated problem in Lemma~\ref{L.wp}.

We expect to have as special solutions to \eqref{S1E1} self-similar solutions of the form 
$$
(\tau,\xi)\longmapsto \frac{1}{\tau^{2/(1-\gamma)}} \phi\left( \frac{\xi}{\tau^{1/(1-\gamma)}} \right)\,, \qquad (\tau,\xi)\in (0,\infty)^2\,,
$$
which means for \eqref{Gequation} rescaled traveling wave solutions of the form
$$
G(\tau,\eta)= \frac{1}{\tau} H\left( \eta - \frac{\ln \tau}{(1{-}\gamma) \ln 2} \right)\,, \qquad (\tau,\eta)\in (0,\infty)\times\R\,.
$$ 
In order to study the existence of such special solutions and the large time behaviour of solutions to \eqref{Gequation} we introduce the new variables
\begin{equation}\label{variables}
 \alpha:= (1{-}\gamma) \ln 2\,, \quad t:=\frac{\ln \tau}{\alpha} \,, \quad x:=\eta-\frac{\ln \tau}{\alpha}\,, \quad \mbox{ and } \quad e^{-\alpha t} h(t,x) := \alpha G(\tau,\eta)\,,
\end{equation}
and we deduce from \eqref{Gequation} and \eqref{Gmass} that $h$ solves
\begin{equation}\label{heq1}
\partial_t h(t,x) =  \partial_{x} h(t,x) + \alpha h(t,x) + e^{-\alpha} h^2(t,x{-}1) - h^2(t,x)\,, \qquad (t,x)\in\R^2\,,
\end{equation}
and
\begin{equation}\label{heq2}
 \int_{\R} e^{\alpha x} h(t,x)\,dx= \int_{\R} e^{\alpha x}  h(0,x)\,dx\,, \quad t\in\R \,.
\end{equation}

A self-similar solution to \eqref{S1E1} corresponds now to a stationary solution $\bar h$ of \eqref{heq1}, that is, to a solution of 
\begin{equation}\label{heq3}
0 =  \partial_{x}\bar h(x) + \alpha \bar h(x) + e^{-\alpha} {\bar h}^2(x{-}1) - {\bar h}^2(x)\,, \qquad x\in\R\,.
\end{equation}
Notice that the equation~\eqref{heq3} is invariant under a shift, that is, if $\bar h$ is a solution to \eqref{heq3}, then so is $x\mapsto \bar h(x+\lambda)$ for all $\lambda\in\R$. Recall that we are interested in solutions $\bar h$ to \eqref{heq3} that are nonnegative, locally integrable, and such that $\bar h \in L^1(\R; e^{\alpha x} \,dx)$. By shifting $\bar h$ we can normalize the mass so that 
$$
\int_{\R} e^{\alpha x} \bar h(x)\,dx = 1\,.
$$ 
Notice also that it follows directly from \eqref{heq3} and the local integrability of $\bar h$ that $\bar h \in C^{1}(\R)$.

It is proved in \cite{Leyvraz05} by a shooting method that for any $a>0$ there exists a nonnegative solution $\bar h_a\in L^1(\R; e^{\alpha x} \,dx)$ to \eqref{heq3} that is bounded, decreasing and satisfies
\begin{equation}\label{S2E4}
 \lim_{x \to -\infty} 2^{-\sigma x} \Big( \bar h_a(x)- \frac{\alpha}{1{-}e^{-\alpha}}\Big) = -a \,,
\end{equation}
where $\sigma$ is the only positive root of the equation $(1+\ln 2 \sigma/\alpha)(1-e^{-\alpha}) = 2(1-e^{-\alpha} 2^{-\sigma})$.
The constant $a$ is just a normalization and corresponds to rescaling the mass $\int_{\R}e^{\alpha x} \bar h_a(x)\,dx$. 
Furthermore, the function $\bar h_a$ satisfies 
\begin{equation}\label{S2E5}
 0<\bar h_a(x) \leq C_a\exp\Big( -L_a 2^x\Big) \qquad \mbox{ for some } \; C_a>0 \;\mbox{ and }\; L_a>0\,.
\end{equation}
Our goal in this paper is to study the behaviour of  solutions to \eqref{heq1}  as $t \to \infty$ and to figure out what is the role played by the family $(\bar h_a)_{a>0}$, if any. As a consequence of our results we will actually show that stationary solutions are unique up to rescaling.

\subsection{Main results}\label{Ss.results}

A key property of the diagonal kernel is that the evolution of $h(t,x)$ depends only on the discrete set of values $x-k$, $k \in \N$.
Correspondingly, one  of the key points of the argument used in this paper is a decomposition of
the plane $\left\{  \left(  t,x\right) \in\R^2\right\}$ in  a family of lines whose evolution under
(\ref{heq1}) is mutually decoupled.

More specifically, given $\theta\in [0,1)$, we define the following family of lines:
$$
\mathcal{S}_{\theta} := \left\{  \left(  t,x\right) \in\R^2 \,:\, x+t-\theta \in \Z \right\}\,.
$$
Obviously for given $\theta_{1},\theta_{2}\in [0,1)$ with $\theta_{1}\neq\theta_{2}$ we have $\mathcal{S}_{\theta_{1}} \cap\mathcal{S}_{\theta_{2}}=\emptyset$,
while $\bigcup_{\theta\in\left[  0,1\right) }\mathcal{S}_{\theta}=\R^2$.
Hence, the following function $\Theta$ is well-defined.

\begin{definition} \label{theta}
We define the function $\Theta\colon\mathbb{R}\times\mathbb{R}\rightarrow
\left[ 0,1\right)  $ as the unique $\theta=\Theta\left(  t,x\right)  $ such
that $\left(  t,x\right)  \in\mathcal{S}_{\Theta\left(  t,x\right)  }$, that is, $\Theta(t,x)=t+x -\lfloor t+x\rfloor$. 
\end{definition}

Notice that the function  $\Theta$ is $1$-periodic in both variables, that is
\begin{equation}
\Theta\left(  t+1  ,x\right)
=\Theta\left(  t,x\right)  \  ,\qquad \Theta\left(  t,x\right)  =\Theta\left(
t,x-1\right)  \  ,\qquad \left(  t,x\right)  \in\mathbb{R}
\times\mathbb{R}\,.\label{S3E6b}%
\end{equation}
We also define the function
\begin{equation}\label{psidef}
 \psi(t,\theta) := t-\theta -\lfloor t-\theta\rfloor \in [0,1)\,, \qquad (t,\theta) \in [0,\infty)\times [0,1)\,.
\end{equation}
Notice that in particular $\psi(0,\theta)=1-\theta$ for $\theta\in (0,1)$ and that $t\mapsto \psi(t,\theta)$ is right-continuous and jumps  from $1$ to $0$ at times $n+\theta$, $n \in \N$, 
\begin{equation}
\psi((n+\theta)^-,\theta) := \lim_{t\nearrow n+\theta} \psi(t,\theta) =1 \quad \mbox{ and } \quad \psi(n+\theta,\theta)=0\,. \label{jumppsi}
\end{equation}

\bigskip

We now fix a nonnegative solution $\bar h\in C^1(\R)\cap L^1(\R;e^{\alpha x}\,dx)$  to \eqref{heq3} enjoying the properties \eqref{S2E4} and \eqref{S2E5} and with mass normalized to one, that is 
\begin{equation}\label{heq4}
 1 = \int_{\R} e^{\alpha x} \bar h(x)\,dx= \int_0^1 \Big\{ \sum_{k\in \Z} e^{\alpha(k+\theta)} \bar h(k+\theta)\Big\}\,d\theta\,.
\end{equation}
Introducing 
$$
\nu(\theta) := \sum_{k \in \Z} e^{\alpha(k+\theta)} \bar h(k+\theta)\,, \quad \theta\in [0,1)\,,
$$
we infer from \eqref{heq3} and \eqref{S2E5} that 
$$
0 = \sum_{k\in\Z} \left[ \partial_x \bar h(k+\theta) + \alpha \bar h(k+\theta) \right] e^{\alpha(k+\theta)} = \sum_{k\in\Z} \frac{d}{d\theta} \left[ e^{\alpha(k+\theta)} \bar h(k+\theta) \right] = \frac{d\nu}{d\theta}(\theta)\,.
$$
Consequently, $\nu(\theta)=const.$ for $\theta\in [0,1)$ and the normalization \eqref{heq4} entails that 
\begin{equation}
\sum_{k\in\Z} e^{\alpha(k+\theta)} \bar h(k+\theta) = 1\ , \quad \theta\in [0,1)\,. \label{phl1}
\end{equation}
Then, in order to obtain from $\bar h$ a stationary solution to \eqref{heq1} with mass $M>0$, we need to shift $\bar h$ and see that the shift $\lambda$ is determined by the following relation:
\begin{equation}\label{phl2}
M = \sum_{k \in \Z} e^{\alpha k} \bar h(k-\lambda) \qquad \mbox{ if and only if }\qquad \lambda =  \frac{\ln M}{\alpha}\,.
\end{equation}

As pointed out above, the evolution of \eqref{heq1} decouples into an evolution for each 'fibre' $\mathcal{S}_\theta$, $\theta \in [0,1)$.  We shall see that, given a nonnegative initial condition $h_0\in L^\infty(\R)$, the long-time behaviour of the corresponding solution to \eqref{heq1} is determined by the mass 
of the initial condition in each fibre, given by 
\begin{equation}\label{mnoddef}
 m_0(\theta):= \sum_{k \in \Z} e^{\alpha ( k+\theta)} h_0(k+\theta)\,, \quad \theta\in [0,1)\,.
\end{equation}
More precisely, we will see that a solution of \eqref{heq1} is for large times approximated in each fibre $\theta\in [0,1)$ by the stationary solution with the same mass, that is, according to \eqref{phl2}, the shift $\bar h(\cdot - \lambda(\theta))$ of $\bar h$ such that
\begin{equation}\label{heq6}
 m_0(\theta) = \sum_{k \in \Z} e^{\alpha k} \bar h(k-\lambda(\theta))\,, \qquad \lambda(\theta) :=  \frac{\ln m_0(\theta)}{\alpha}\,.
\end{equation}

We can now formulate our main results.
\begin{theorem}\label{thm1}
Consider the solution $h$ of \eqref{heq1} with a nonnegative initial condition $h_0 \in L^{\infty} (\R)$ satisfying $m_0 \in L^1(0,1)$,  where $m_0$ is defined in \eqref{mnoddef}. Introducing
$$
 \mu(t,x):=\frac{\ln \big( m_0\big(\Theta(t,x)\big)\big)}{\alpha}\,, \quad (t,x)\in (0,\infty)\times\R\,,
$$
there holds 
$$
\lim_{t\to\infty} \int_{\R} e^{\alpha x} \Big|h(t,x)-\bar h\big(x-\mu(t,x)\big)\Big| \,dx = 0\,.
$$
\end{theorem}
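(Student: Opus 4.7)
My plan is to exploit the fibre decoupling of \eqref{heq1} emphasized in Section~\ref{Ss.results}: for each $\theta\in[0,1)$ the restriction of \eqref{heq1} to $\mathcal{S}_{\theta}$ reduces to an infinite ODE system on $\Z$, and different fibres do not interact. I would prove convergence on each fibre separately and then integrate in $\theta$.

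First, for $\theta\in[0,1)$ and $k\in\Z$, set $H_k(t;\theta):=h(t,k+\theta-t)$ and $\bar H_k(t;\theta):=\bar h(k+\theta-t-\lambda(\theta))$ with $\lambda(\theta)$ given by \eqref{heq6}. Taking the total derivative along the characteristic $x=k+\theta-t$ and inserting \eqref{heq1}, resp.\ \eqref{heq3}, shows that both $H_k$ and $\bar H_k$ solve the same infinite ODE system
\begin{equation*}
 \dot U_k = \alpha U_k + e^{-\alpha} U_{k-1}^2 - U_k^2\,, \qquad k\in\Z\,.
\end{equation*}
Multiplying by $e^{\alpha(k+\theta-t)}$, summing in $k$ and telescoping the quadratic terms proves that $\sum_{k\in\Z} e^{\alpha(k+\theta-t)}U_k(t)$ is conserved in $t$; by definition of $m_0$, resp.\ by \eqref{phl1} and \eqref{heq6}, this invariant equals $m_0(\theta)$ for both $H$ and $\bar H$, so the two solutions sit at the same fibre mass level.

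Setting $D_k:=H_k-\bar H_k$ and using the sign inequality $\mathrm{sign}(D_k)(H_{k-1}+\bar H_{k-1})D_{k-1}\le(H_{k-1}+\bar H_{k-1})|D_{k-1}|$, the same telescoping produces a fibrewise $L^1$ contraction
\begin{equation*}
 \frac{d}{dt}\sum_{k\in\Z}e^{\alpha(k+\theta-t)}\bigl|H_k(t;\theta)-\bar H_k(t;\theta)\bigr|\le 0\,.
\end{equation*}
Unfolding the parametrization $x=k+\theta-t$ in the integral to be controlled gives the identity
\begin{equation*}
 \int_{\R} e^{\alpha x}\bigl|h(t,x)-\bar h(x-\mu(t,x))\bigr|\,dx = \int_0^1 \sum_{k\in\Z} e^{\alpha(k+\theta-t)}\bigl|H_k(t;\theta)-\bar H_k(t;\theta)\bigr|\,d\theta\,,
\end{equation*}
and at $t=0$ the $\theta$-integrand is bounded by $2m_0(\theta)\in L^1(0,1)$. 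By the monotonicity above and dominated convergence it thus suffices to prove that for a.e.\ $\theta\in[0,1)$ the fibre sum tends to zero as $t\to\infty$.

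This purely ODE convergence statement is the main obstacle, because the $L^1$-contraction is not strict. To close it I would combine the discrete maximum principle built into \eqref{heq1} (providing a uniform $L^\infty$-bound on $H_k$, hence local equicontinuity in $t$) with a compactness argument in a moving frame: along any $t_n\to\infty$, after extracting a subsequence along which $t_n-\lfloor t_n\rfloor\to t^{\ast}$, the shifted sequences $(H_{k+\lfloor t_n\rfloor}(t_n+\,\cdot\,))_{k\in\Z}$ converge locally uniformly to an entire bounded nonnegative solution $H^{\infty}$ of the same ODE system, sitting at the same fibre mass as the correspondingly shifted reference profile $\bar H^{\infty}$. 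Since the contraction functional of $H^{\infty}$ against $\bar H^{\infty}$ is constant in time (being the limit of the monotone nonincreasing fibre sum along $t_n$), equality must hold a.e.\ in $t$ in the contraction inequality; this forces $H^{\infty}_k-\bar H^{\infty}_k$ to have constant sign in $k$, and the vanishing weighted sum then yields $H^{\infty}\equiv\bar H^{\infty}$. Consequently the nonincreasing fibre sum has zero as its only subsequential limit and so tends to zero, which finishes the proof after integration in $\theta$.
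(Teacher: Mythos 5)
Your strategy is, up to bookkeeping, the one the paper follows: your unwrapped characteristics $H_k(t;\theta)=h(t,k+\theta-t)$ with the decaying weight $e^{\alpha(k+\theta-t)}$ are equivalent to the paper's wrapped variables $\varphi_k^\theta$ (Lemma~\ref{L.htophi}), your fibrewise $L^1$ contraction is Lemma~\ref{L.lyapunov} combined with the jump relation \eqref{thm16}, and your compactness-plus-rigidity argument for the limit solution is the content of Lemmas~\ref{L.dissipation} and~\ref{L.main}. The reduction to a.e.\ fibres, the domination by $2m_0(\theta)$, and the sign-rigidity conclusion (zero dissipation forces the sign of $D_k$ to propagate in $k$, which is incompatible with a vanishing weighted sum unless $D\equiv 0$) are all sound.

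The genuine gap is tightness, which you never address and which the paper isolates as Lemma~\ref{L.tightness}. Your compactness step only gives locally uniform convergence in $k$, but the weight $e^{\alpha k}$ grows as $k\to+\infty$, so the uniform $L^\infty$ bound controls the tail only at $k\to-\infty$. Without a uniform-in-time estimate of the form $\sum_{k\ge N}e^{\alpha k}\varphi_k^\theta(t)\le\eps$, weighted mass can escape to $k\to+\infty$ along your subsequence; then (i) the fibre sum of the shifted solutions need not converge to the fibre sum of $H^\infty$, so you cannot conclude that the limit's contraction functional is constant in $s$, (ii) $H^\infty$ need not carry the fibre mass $m_0(\theta)$, so the zero-sum step in the rigidity argument fails, and (iii) even granting $H^\infty\equiv\bar H^\infty$ you could not infer that the original fibre distance tends to zero, since the escaping mass still contributes to it. Closing this requires a separate idea: the paper compares the quantiles $Q_k(t)=\sum_{\ell\ge k}e^{\alpha\ell}\varphi_\ell^\theta(t)$ with a supersolution built from a shifted stationary profile plus $\eps e^{\alpha\psi(t)}$, using the superexponential decay \eqref{S2E5}. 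A secondary, fixable omission: the uniform $L^\infty$ bound \eqref{wp0} is not a naive maximum principle, because of the linear growth term $\alpha h$ in \eqref{heq1}; the paper obtains it in Lemma~\ref{L.wp} by comparison with an explicit Riccati supersolution. Likewise, the conservation of $\sum_k e^{\alpha(k+\theta-t)}U_k$ by telescoping needs the tail justification carried out there.
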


Theorem~\ref{thm1} provides a detailed description of the asymptotic behaviour of solutions to \eqref{heq1} as $t \to \infty$ and in particular  implies
 that the long-time 
behaviour is in general periodic. It also  characterizes the class of initial data with finite mass that yield a traveling wave solution (resp. self-similar solution 
in the original variables) as $t \to \infty$.

\begin{corollary}\label{cor1}
 We have convergence to a stationary solution if and only if the function $m_0$ as defined in \eqref{mnoddef} is constant almost everywhere in $[0,1)$.
\end{corollary}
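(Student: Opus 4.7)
The ``$\Leftarrow$'' direction drops out of Theorem~\ref{thm1}: if $m_0\equiv M$ almost everywhere, then $\mu(t,x)=\ln M/\alpha=:\lambda$ is constant, so the theorem yields $h(t,\cdot) \to \bar h(\cdot - \lambda)$ in $L^1(\R;e^{\alpha x}\,dx)$, and $\bar h(\cdot - \lambda)$ is a stationary solution by~\eqref{phl2}.

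For the ``$\Rightarrow$'' direction, suppose $h(t,\cdot)\to h_\infty$ in $L^1(\R;e^{\alpha x}\,dx)$ for some stationary $h_\infty$. Theorem~\ref{thm1} and the triangle inequality give
\begin{equation*}
\bar h(\cdot -\mu(t,\cdot))\xrightarrow[t\to\infty]{} h_\infty\quad\text{in } L^1(\R;e^{\alpha x}\,dx)\,.
\end{equation*}
Writing $\lambda(\theta):=\ln m_0(\theta)/\alpha$, the key structural observation is that $(t,x)\mapsto \bar h(x-\mu(t,x))$ is $1$-periodic in $t$: indeed $\mu(t,x)=\lambda(\Theta(t,x))$ and $\Theta(t,x)=t+x-\lfloor t+x\rfloor$ is itself $1$-periodic in $t$ by~\eqref{S3E6b}. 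A $1$-periodic function having an $L^1$-limit at infinity must equal that limit at every $t$, whence
\begin{equation*}
\bar h\bigl(x-\lambda(\Theta(t,x))\bigr)=h_\infty(x)\quad\text{for a.e.\ }x\in\R\text{ and every }t\in\R\,.
\end{equation*}

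Fubini then turns this into an a.e.\ identity on $[0,1)\times\R$. For a.e.\ fixed $x$, the map $t\mapsto\Theta(t,x)$ is a measure-preserving bijection of $[0,1)$, so reparametrising in $\theta=\Theta(t,x)$ yields: for a.e.\ $(\theta,x)\in[0,1)\times\R$, $\bar h(x-\lambda(\theta))=h_\infty(x)$. One more application of Fubini ensures that for a.e.\ $\theta\in[0,1)$, $\bar h(\cdot-\lambda(\theta))=h_\infty(\cdot)$ almost everywhere on $\R$; since both sides lie in $C^1(\R)$ (any locally integrable solution of~\eqref{heq3} does, as noted after~\eqref{heq3}), the equality propagates to a pointwise identity on $\R$.

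If $\lambda$ were not essentially constant, one could pick $\theta_1,\theta_2$ in this full-measure set with $\lambda(\theta_1)\neq\lambda(\theta_2)$. The identities $\bar h(\cdot -\lambda(\theta_i))=h_\infty(\cdot)$ would then force $\bar h$ to be periodic with nonzero period $\lambda(\theta_2)-\lambda(\theta_1)$, contradicting the decay $\bar h(x)\to 0$ as $x\to+\infty$ from~\eqref{S2E5}. Hence $\lambda$ is constant a.e.\ on $[0,1)$, which is precisely the conclusion that $m_0$ is a.e.\ constant. The one delicate point in the plan is the upgrade from an a.e.\ identity in $(t,x)$ to a pointwise identity on all of $\R$, where the $C^1$-regularity of $\bar h$ and the sweeping property of $\Theta$ are essential in order to invoke the decay information~\eqref{S2E5}.
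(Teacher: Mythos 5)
Your proof is correct and fleshes out what the paper merely asserts is ``a direct consequence of Theorem~\ref{thm1}.'' The $\Leftarrow$ direction is immediate, and for $\Rightarrow$ you correctly exploit the $1$-periodicity in $t$ of $\bar h(\cdot - \mu(t,\cdot))$, pass via Fubini to the identity $\bar h(\cdot-\lambda(\theta))=h_\infty$ for a.e.\ $\theta$, and use the positivity and decay of $\bar h$ from~\eqref{S2E5} to exclude a nontrivial periodic shift, forcing $\lambda$ (hence $m_0$) to be essentially constant.
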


Another consequence of Theorem~\ref{thm1} is the fact that stationary solutions of \eqref{heq1} are unique up to rescaling. 

\begin{corollary}\label{cor2}
Let ${\bar h}_1$ and ${\bar h}_2$ be two nonnegative solutions to \eqref{heq3} in $L^1(\R;e^{\alpha x}\,dx)$. Then there exists $\Lambda \in \R$ such that ${\bar h}_1(x)= {\bar h}_2(x-\Lambda)$ for $x\in\R$.
\end{corollary}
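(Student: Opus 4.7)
The strategy is to apply Theorem~\ref{thm1} with initial data $h_0=\bar h_i$ for $i=1,2$. Since $\bar h_i$ already solves \eqref{heq3}, the associated solution to \eqref{heq1} is the stationary one $h(t,x)=\bar h_i(x)$ (by uniqueness, Lemma~\ref{L.wp}). If Theorem~\ref{thm1} applies, it will force $\bar h_i$ to equal a specific shift of the reference profile $\bar h$, and composing the two shifts yields the corollary.

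The first task is to check the hypotheses of Theorem~\ref{thm1} for $h_0=\bar h_i$. Nonnegativity is given, and $m_0\in L^1(0,1)$ follows from Fubini via $\int_0^1 m_0(\theta)\,d\theta=\int_\R e^{\alpha x}\bar h_i(x)\,dx<\infty$. The $L^\infty$ bound must be extracted from \eqref{heq3} together with $\bar h_i\in L^1(\R;e^{\alpha x}\,dx)\cap C^1(\R)$, using that (i) equating stationary values in the delay ODE forces $\bar h_i(x)\to \alpha/(1-e^{-\alpha})$ as $x\to-\infty$, and (ii) the integrability condition combined with the sign structure of \eqref{heq3} forces fast decay as $x\to +\infty$.

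Next I claim the fibre mass $m_0(\theta)=\sum_{k\in\Z} e^{\alpha(k+\theta)}\bar h_i(k+\theta)$ is a constant $M_i>0$. Multiplying \eqref{heq3} by $e^{\alpha x}$ gives
$$
\frac{d}{dx}\bigl[e^{\alpha x}\bar h_i(x)\bigr] = e^{\alpha x}\bar h_i^2(x) - e^{\alpha(x-1)}\bar h_i^2(x-1)\,,
$$
and summing these identities over integer shifts makes the right-hand side telescope to zero (once the decay at $\pm\infty$ secured above justifies the termwise summation), so $dm_0/d\theta=0$ on $(0,1)$. Consequently $\mu(t,x)\equiv \Lambda_i:=(\ln M_i)/\alpha$ in the notation of Theorem~\ref{thm1}, and that theorem applied to the stationary solution $h(t,\cdot)=\bar h_i$ yields
$$
\int_{\R} e^{\alpha x}\bigl|\bar h_i(x)-\bar h(x-\Lambda_i)\bigr|\,dx = 0\,.
$$
By continuity of both functions this promotes to $\bar h_i(x)=\bar h(x-\Lambda_i)$ for every $x\in\R$, and setting $\Lambda:=\Lambda_2-\Lambda_1$ gives $\bar h_1(x)=\bar h_2(x-\Lambda)$.

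The main obstacle is the preparatory work of establishing $\bar h_i\in L^\infty(\R)$ and extracting enough decay to justify the termwise differentiation of $m_0$: in \cite{Leyvraz05} the solutions are constructed with the explicit bound \eqref{S2E5}, but here only $\bar h_i\in L^1(\R;e^{\alpha x}\,dx)$ is assumed, so one must exploit the delay structure of \eqref{heq3} directly to recover the corresponding quantitative information. Once these properties are in hand, the corollary reduces to a single clean application of Theorem~\ref{thm1}.
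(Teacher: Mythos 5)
Your overall strategy coincides with the paper's: apply Theorem~\ref{thm1} to the stationary solution generated by each $\bar h_i$, observe that the fibre mass $m_0$ is constant (by summing the identity $\frac{d}{dx}[e^{\alpha x}\bar h_i(x)] = e^{\alpha x}\bar h_i^2(x) - e^{\alpha(x-1)}\bar h_i^2(x-1)$ over integer shifts, exactly as in the derivation of \eqref{phl1}), and conclude that $\bar h_i$ is a fixed translate of the reference profile $\bar h$. Your Fubini argument for $m_0\in L^1(0,1)$ and the upgrade from a.e.\ equality to everywhere equality by continuity are also fine.

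The genuine gap is precisely the step you yourself flag as ``the main obstacle'': establishing $\bar h_i\in L^\infty(\R)$ from the bare hypotheses $\bar h_i\in C^1(\R)\cap L^1(\R;e^{\alpha x}\,dx)$. Your proposed route does not close. Claim~(i) — that the delay ODE forces $\bar h_i(x)\to\alpha/(1-e^{-\alpha})$ as $x\to-\infty$ — presupposes that a limit exists, which is exactly the kind of a priori control one is trying to obtain; without boundedness in hand you cannot run a phase-plane or fixed-point-selection argument for the delay equation. Claim~(ii) — that integrability plus the sign structure ``forces fast decay'' at $+\infty$ — is also not a proof: membership in $L^1(\R;e^{\alpha x}\,dx)$ only yields decay along a subsequence, not pointwise fast decay. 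The paper devotes Lemma~\ref{L.regularity} to this step and uses a self-contained contradiction argument: first integrate \eqref{heq3} to obtain $e^{\alpha x}\bar h(x)=\int_{x-1}^x\bar h^2(t)e^{\alpha t}\,dt$ (identifying the constant of integration as zero, which itself uses a subsequence of points where $e^{\alpha x}\bar h(x)$ is small and a Riccati comparison to control $\bar h$ on the adjacent unit interval), and then show that if $\bar h(x_0)\ge 8e^{2\alpha}$ at any single $x_0$, the integral identity propagates a uniform lower bound along the half-lattice $x_0+k/2$, $k\in\N$, giving $\bar h\ge M/(2e^{\alpha/2})$ on a union of intervals of total $e^{\alpha x}\,dx$-measure $+\infty$ — contradicting $\int e^{\alpha x}\bar h\,dx<\infty$. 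This argument avoids any analysis of limits at $\pm\infty$ and is the ingredient your sketch is missing.
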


The remainder of the paper is organized as follows. 
Section~\ref{S.thm1}, which is the core of our paper, contains the proof of Theorem~\ref{thm1} and consists of several lemmas. The key idea is that
we can construct a Lyapunov functional for the evolution in each fibre that resembles a contractivity property of solutions to scalar conservation laws and nonlinear diffusion equations  (see Lemma~\ref{L.lyapunov}) \cite{DE05, Vaz07}. Another key auxiliary result is a tightness property that we prove in Lemma~\ref{L.tightness}.  This in turn allows us to obtain a lower bound for the  dissipation functional in Lemma~\ref{L.dissipation}. All these results are proved pointwise in the sense that we prove them for each fibre. Corollary~\ref{cor1} is then a direct  consequence of Theorem~\ref{thm1}. Finally, in order to apply Theorem~\ref{thm1} to deduce the uniqueness result stated in Corollary~\ref{cor2} we need an \textit{a priori} $L^{\infty}$ bound for arbitrary nonnegative solutions to \eqref{heq3}. The corresponding result provided in Lemma~\ref{L.regularity} and the conclusion
are the contents of Section~\ref{S.apriori}.

\section{Proof of Theorem~\ref{thm1}}
\label{S.thm1}

Consider $h_0\in L^\infty(\R)$, $h_0\ge 0$, such that $m_0$ defined in \eqref{mnoddef} belongs to $L^1(0,1)$. In particular, $m_0(\theta)$ is finite for almost every $\theta\in [0,1)$. Let $h$ be the solution to \eqref{heq1} with initial condition $h(0)=h_0$ at time $t=0$. 

We introduce further suitable variables. For $\theta \in [0,1)$ such that $m_0(\theta) \in (0,\infty)$  we define  the right-continuous functions
\begin{equation}\label{thm13}
\p^{\theta}_{k}(t):= h\big(t,k+1-\psi(t,\theta)\big)\qquad \mbox{ and } \qquad {\bar \p}^{\theta}_{k}(t):=\bar h\big(k+1 -\lambda(\theta)-\psi(t,\theta)\big)
\end{equation}
for $(t,k)\in [0,\infty)\times\Z$, where $\psi$ is defined in \eqref{psidef} and $\lambda(\theta)=\ln m_0(\theta)/\alpha$. 
Observe that ${\bar \p}^{\theta}_k$ is $1$-periodic. 

The motivation for introducing these variables is as follows. In order to get rid of the transport term $\partial_x h$ in \eqref{heq1} we as usual go over to characteristics. However, the solution is then continuously shifted to the right by the coagulation term. 
Thus, to keep the solution at scales of order one, we shift the solution back to the left at times $n+\theta$, $n \in \N$. In a more quantitative way, we report the following identity:
\begin{lemma}\label{L.htophi}
For $t>0$ there holds 
$$
\int_\R e^{\alpha x} \Big| h(t,x) - \bar h(x-\mu(t,x)) \Big| \,dx = \int_0^1 e^{\alpha(1-\psi(t,\theta))} \sum_{k\in\Z} e^{\alpha k} \Big| \varphi_k^\theta(t) - {\bar\varphi}^\theta_k(t) \Big| \,d\theta\,.
$$
\end{lemma}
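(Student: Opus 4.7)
The plan is to reparametrize the $x$-integration by a pair $(k, \theta) \in \mathbb{Z} \times [0, 1)$ implicitly defined via $x = k + 1 - \psi(t, \theta)$; the two quantities inside the absolute value on the left-hand side then turn into $\varphi_k^\theta(t)$ and $\bar\varphi_k^\theta(t)$ by the very definitions in \eqref{thm13}.

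First, for a fixed $t > 0$, I would describe $\theta \mapsto \psi(t, \theta) = (t - \theta) \bmod 1$ on $[0, 1)$. Writing $\theta_\star := t - \lfloor t \rfloor \in [0, 1)$, a short case distinction based on the sign of $t - \theta - \lfloor t \rfloor$ shows that $\psi(t, \cdot)$ equals $\theta_\star - \theta$ on $[0, \theta_\star]$ and $1 + \theta_\star - \theta$ on $(\theta_\star, 1)$; in particular it has slope $-1$ wherever differentiable. Consequently, for each $k \in \mathbb{Z}$ the map $\theta \mapsto k + 1 - \psi(t, \theta)$ sends $[0, \theta_\star]$ bijectively onto $[k + 1 - \theta_\star, k + 1]$ and $(\theta_\star, 1)$ bijectively onto $(k, k + 1 - \theta_\star)$, each with Jacobian of modulus $1$. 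Since $\{(k, k + 1]\}_{k \in \mathbb{Z}}$ partitions $\mathbb{R}$ up to a null set, the change of variables formula yields
\begin{equation*}
\int_\mathbb{R} F(x)\, dx = \sum_{k \in \mathbb{Z}} \int_0^1 F\bigl( k + 1 - \psi(t, \theta) \bigr) \, d\theta
\end{equation*}
for every non-negative measurable $F$.

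Second, I would verify the identity $\Theta\bigl( t, k + 1 - \psi(t, \theta) \bigr) = \theta$ for every $k \in \mathbb{Z}$ and $\theta \in [0, 1)$. Indeed, unfolding $\psi(t, \theta) = t - \theta - \lfloor t - \theta \rfloor$ gives $t + k + 1 - \psi(t, \theta) = \bigl( k + 1 + \lfloor t - \theta \rfloor \bigr) + \theta$, a decomposition whose first summand is an integer and whose second summand lies in $[0, 1)$, so that taking fractional parts yields exactly $\theta$. Hence $\mu\bigl( t, k + 1 - \psi(t, \theta) \bigr) = \ln m_0(\theta)/\alpha = \lambda(\theta)$, and combining with the definitions in \eqref{thm13} we obtain
\begin{equation*}
h\bigl( t, k + 1 - \psi(t, \theta) \bigr) = \varphi_k^\theta(t), \qquad \bar h\bigl( k + 1 - \psi(t, \theta) - \lambda(\theta) \bigr) = \bar\varphi_k^\theta(t).
\end{equation*}

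Finally, I would apply the change of variables formula with $F(x) := e^{\alpha x} \bigl| h(t, x) - \bar h\bigl( x - \mu(t, x) \bigr) \bigr|$, substitute the two identities just obtained, and split $e^{\alpha(k + 1 - \psi(t, \theta))} = e^{\alpha k} e^{\alpha(1 - \psi(t, \theta))}$. Interchanging the sum and the integral via Tonelli (the integrand is non-negative) and pulling $e^{\alpha(1 - \psi(t, \theta))}$ out of the $k$-sum produces the right-hand side of the claim. The only point that requires any care is the treatment of the jump of $\psi(t, \cdot)$ at $\theta_\star$ in the bijectivity argument, but since the two affine branches map onto complementary pieces of $(k, k + 1]$ this is entirely routine, and I do not anticipate any genuine obstacle in the proof.
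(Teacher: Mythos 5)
Your proof is correct and is essentially the same change-of-variables argument the paper sketches: both decompose $\mathbb R$ into the intervals $(k,k+1]$ and reparametrize each one by $\theta\in[0,1)$ via a piecewise-affine map with Jacobian $\pm 1$, using $\Theta(t,k+1-\psi(t,\theta))=\theta$ to turn $\mu$ into $\lambda(\theta)$ so that the two terms become $\varphi_k^\theta(t)$ and $\bar\varphi_k^\theta(t)$. Your version is simply more explicit than the paper's two-line sketch and fills in the computation correctly.
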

\begin{proof}
Clearly
\begin{align*}
& \int_\R e^{\alpha x} \Big| h(t,x) - \bar h(x-\mu(t,x)) \Big| \,dx \\
& = \int_0^1 e^{\alpha y} \sum_{k\in\Z} e^{\alpha k} \Big| h(t,y+k) - {\bar h}(y+k-\mu(t,y+k)) \Big| \,dy\,.
\end{align*}
We next make the change of variables $y=\theta - t + \lfloor t \rfloor$ for $y\in (t - \lfloor t \rfloor,1)$ and $y = 1+ \theta - t + \lfloor t \rfloor$ for $y\in (0,t - \lfloor t \rfloor)$ and use the definition of $\mu$ and the properties of $\Theta$ and $\psi$ to complete the proof.
\end{proof}

Define next
\begin{equation}
\mathcal{J}^\theta := (0,\theta) \cup \bigcup_{n\in\N} (n+\theta,n+1+\theta)\,. \label{phl3}
\end{equation}
It follows from \eqref{heq1} that
\begin{equation}\label{thm14}
 \frac{d}{dt}\p^{\theta}_{k}(t)= \alpha \p^{\theta}_{k}(t) + e^{-\alpha}\big(\p^{\theta}_{k-1}\big)^2(t) - \big(\p^{\theta}_{k}\big)^2(t) \qquad \mbox{ for } t \in \mathcal{J}^\theta
\end{equation}
and, by \eqref{jumppsi} and \eqref{thm13},
\begin{equation}\label{thm13b}
 \p^{\theta}_{k}\big((n+\theta)^-\big) = \lim_{t \nearrow n+\theta} \p^{\theta}_{k}(t) = h(t,n+\theta) = \lim_{t\searrow n+\theta} \p^{\theta}_{k-1}(t) =  \p^{\theta}_{k-1}\big((n+\theta)\big)\,.
\end{equation}
Also,
\begin{equation}
\sum_{k\in\Z} e^{\alpha(k+\theta)} \p_k^{\theta}(0)= m_0(\theta)\,, \label{phl4}
\end{equation} 
and, since 
$$
\frac{d}{dt} \sum_{k\in\Z} e^{\alpha k} \varphi_k^\theta(t) = \alpha \sum_{k\in\Z} e^{\alpha k} \varphi_k^\theta(t)\,, \qquad t\in \mathcal{J}^\theta\,,
$$
by \eqref{thm14}, we deduce that
$$
\sum_{k\in\Z} e^{\alpha k} \varphi_k^\theta(t) = e^{\alpha(t-\theta)} m_0(\theta)\,, \quad t\in [0,\theta)\,,  
$$
and, for $n\in\N$,
$$
\sum_{k\in\Z} e^{\alpha k} \varphi_k^\theta(t) = e^{\alpha(t-n-\theta)} \sum_{k\in\Z} e^{\alpha k} \varphi_k^\theta(n+\theta)\,, \quad t\in [n+\theta,n+1+\theta)\,.
$$
As 
$$
\sum_{k\in\Z} e^{\alpha k} \varphi_k^\theta\big( (n+\theta)^- \big) = e^\alpha \sum_{k\in\Z} e^{\alpha k} \varphi_k^\theta(n+\theta) 
$$
for $n\in\N$ by \eqref{thm13b}, an induction argument leads us to the identity
\begin{equation}\label{thm14b}
 \sum_{k\in\Z}  e^{\alpha k} \p^{\theta}_{k}(t)= e^{\alpha (\psi(t,\theta)-1)} m_0(\theta)\,, \quad t\ge 0\,.
\end{equation}

Next, thanks to \eqref{heq3}, \eqref{heq6}, and \eqref{thm13}, we easily check that $({\bar \p}^{\theta}_{k})_k$ satisfy also \eqref{thm14}, \eqref{thm13b}, and \eqref{thm14b}. Due to \eqref{S2E5} we also have that ${\bar \p}^{\theta}_{k}(t) \leq C \exp\big(-L2^k\big)$ as $k \to \infty$ uniformly for $t\in [0,\infty)$.

\bigskip

We now prove a well-posedness result for \eqref{thm14}-\eqref{thm13b} that in particular gives a uniform bound on $(\p^{\theta}_{k})_k$ that is independent of  $\theta$. Let $\mathcal{Y}$ be the space 
$$
\mathcal{Y} := \left\{ \phi = (\phi_k)_{k\in\Z}\ :\ \phi\in l_\infty(\Z) \;\mbox{ and }\; \sum_{k\in\Z} e^{\alpha k} |\phi_k| < \infty \right\}
$$
and denote its positive cone by $\mathcal{Y}_+$.

\begin{lemma}\label{L.wp}
Fix $\theta\in [0,1)$ such that $m_0(\theta)<\infty$. Let  $\p^{\theta}(0)=\Big(\p^{\theta}_{k}(0)\Big)_k \in \mathcal{Y}_+$ such that
\begin{equation} 
\left\| \p^{\theta}(0) \right\|_{l_\infty} \leq C_0 \;\mbox{ and }\; \sum_{k\in\Z} e^{\alpha k} \p_{k}^{\theta}(0)=e^{-\alpha \theta} m_0(\theta) \label{phl5}
\end{equation}
for some $C_0>0$. Then there exists a unique function $\p^{\theta}\in C^0\big([0,\infty); \mathcal{Y}_+\big)$ with $\p_k^{\theta} \in C^1\big( \mathcal{J}^\theta \big)$ for all $k\in\Z$ which solves \eqref{thm14} and \eqref{thm13b} with initial condition $\p^\theta(0)$ and satisfies \eqref{thm14b} as well as the following property:
\begin{equation}\label{wp0}
\sup_{t\ge 0} \left\| \p^{\theta}(t) \right\|_{l_\infty} \leq c_0 := \max\Big\{ C_0, \frac{\alpha}{1{-} e^{-\alpha}}\Big\}\,. 
 \end{equation}
\end{lemma}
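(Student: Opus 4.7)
The plan is to construct $\varphi^\theta$ one smooth piece of $\mathcal{J}^\theta$ at a time, solving \eqref{thm14} by Picard--Lindel\"of in the Banach space $l_\infty(\Z)$ and gluing across jump times via the shift rule \eqref{thm13b}. The starting observation is that the map $F\colon l_\infty(\Z)\to l_\infty(\Z)$ defined by $F(\phi)_k := \alpha\phi_k + e^{-\alpha}\phi_{k-1}^2 - \phi_k^2$ is locally Lipschitz, with Lipschitz constant $\alpha + 2R(1+e^{-\alpha})$ on the ball $\{\|\phi\|_\infty \le R\}$. The abstract Picard--Lindel\"of theorem therefore produces a unique maximal $C^1$ solution $\varphi^\theta\in C^1([0,\tau^*); l_\infty(\Z))$ to \eqref{thm14} starting from $\varphi^\theta(0)$, and the positive cone $\mathcal{Y}_+$ is forward invariant since $\varphi_k^\theta = 0$ forces $\dot{\varphi}_k^\theta = e^{-\alpha}(\varphi_{k-1}^\theta)^2 \ge 0$.

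The heart of the argument is the a priori bound \eqref{wp0}, which rules out blow-up and pushes $\tau^*$ past $\theta$. The key point is that the constant-in-$k$ sequence with value $c_0$ is a stationary supersolution of \eqref{thm14}: by the defining choice of $c_0$ one has $F(c_0)_k = c_0[\alpha - (1-e^{-\alpha})c_0] \le 0$. Writing $u_k := (\varphi_k^\theta - c_0)_+$ and expanding around $c_0$, a direct computation on $\{\varphi_k^\theta \ge c_0\}$ shows that the contributions $(\alpha - 2c_0)u_k$, $-u_k^2$, and $c_0[\alpha - (1-e^{-\alpha})c_0]$ are all nonpositive (using $c_0 \ge \alpha/(1-e^{-\alpha}) \ge \alpha$), leaving
\[
\frac{d u_k}{dt} \le 2 c_0 e^{-\alpha} u_{k-1} + e^{-\alpha} u_{k-1}^2\,.
\]
Integrating over each connected component of $\{u_k>0\}$ (on whose left endpoint $u_k$ vanishes by continuity), taking $\sup_k$, and applying Gronwall together with $\sup_k u_k(0) = 0$ (a consequence of $\|\varphi^\theta(0)\|_\infty \le C_0 \le c_0$) forces $u \equiv 0$, i.e.\ $\varphi^\theta \le c_0$ on $[0,\theta)$.

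With \eqref{wp0} in hand, the weighted mass identity \eqref{thm14b} on $[0,\theta)$ is obtained by differentiating $\sum_k e^{\alpha k}\varphi_k^\theta$ and reindexing to see that the quadratic contributions cancel, leaving $\tfrac{d}{dt}\sum_k e^{\alpha k}\varphi_k^\theta = \alpha \sum_k e^{\alpha k}\varphi_k^\theta$; the interchange of sum and derivative is justified by the $l_\infty$ bound and the initial summability in \eqref{phl5}. At the jump time $t = \theta$ the rule \eqref{thm13b} dictates $\varphi_k^\theta(\theta) = \varphi_{k+1}^\theta(\theta^-)$, and this index shift both preserves the $c_0$-bound and multiplies the weighted sum by $e^{-\alpha}$, which is precisely the jump of $e^{\alpha(\psi(t,\theta)-1)}$ from $1$ to $e^{-\alpha}$ at $t=\theta$. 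Restarting Picard on $[\theta, 1+\theta)$---with $c_0$ still a valid supersolution---and iterating on each interval $[n+\theta, n+1+\theta)$ then yields the global solution with all the stated properties; uniqueness follows from the uniqueness half of Picard--Lindel\"of on each smooth piece combined with the rigid jump rule. The main obstacle I anticipate is the comparison argument in the second paragraph, where the supremum over the infinite index set $\Z$ demands a careful justification of measurability and of the interchange of $\sup_k$ with time integration.
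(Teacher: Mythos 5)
Your proof is correct, but it takes a genuinely different route from the paper, so a comparison is worthwhile.

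The paper first passes to the unknown $\rho_k(t)=e^{-\alpha(\psi(t)-1)}\varphi_k(t)$, which eliminates the linear term $\alpha\varphi_k$, and then runs a fixed-point argument directly in the ball $\mathcal{M}_T\subset C^0([0,T];l_1(e^{\alpha k}))$ with the $l_\infty$ bound built into the ambient set. Conservation of the weighted sum \eqref{thm14b} is obtained by the same truncation argument you use (differentiating $\sum_{|k|\le M}$ and letting $M\to\infty$). For the $l_\infty$ bound \eqref{wp0} the paper constructs a \emph{time-dependent} barrier $\gamma$ solving $\dot\gamma=-e^{\alpha(\psi-1)}(1-e^{-\alpha})\gamma^2$ with $\gamma(0)=e^{\alpha\theta}C_0$ and shows $\sum_{k\le M}e^{\alpha k}(\rho_k-\gamma)_+$ is nonincreasing; undoing the change of variables and taking a sup over $\tau\ge 0$ then yields exactly $c_0=\max\{C_0,\alpha/(1-e^{-\alpha})\}$. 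You instead run Picard--Lindel\"of directly for $\varphi$ in $l_\infty$, observe that the constant sequence $c_0$ is a stationary supersolution of \eqref{thm14} (since $F(c_0)_k=c_0[\alpha-(1-e^{-\alpha})c_0]\le 0$), and close the comparison by a pointwise differential inequality for $u_k=(\varphi_k-c_0)_+$ plus a $\sup_k$-Gronwall. Your constant-barrier argument is more elementary than the paper's weighted-sum/time-dependent-barrier computation and reaches the same constant; the paper's approach has the advantage that working in $l_1(e^{\alpha k})$ from the start makes $\mathcal{Y}$-membership and the identity \eqref{thm14b} drop out of the fixed-point setup, whereas you need the separate truncation and Scheff\'e-type step to upgrade the $l_\infty$ solution to one in $C^0([0,\infty);\mathcal{Y}_+)$. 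As a side remark, the obstacle you flag at the end is actually harmless: the index set is countable, so $U(t)=\sup_k u_k(t)$ is automatically measurable, and the only interchange you need, $\sup_k\int\le\int\sup_k$, holds trivially. One small imprecision: in your mass-conservation step the quadratic contributions do not cancel exactly upon reindexing but leave a telescoping boundary term $e^{-\alpha(M+1)}\varphi_{-(M+1)}^2-e^{\alpha M}\varphi_M^2$, which is controlled by the $l_\infty$ bound and the summability; this is precisely the two-sided squeeze the paper carries out.
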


\begin{proof}
Throughout the proof we omit the dependence on  $\theta$ in the notation. It turns out to be convenient to go over to the unknown function 
$$
\rho_k(t):=e^{-\alpha (\psi(t)-1)}\p_k(t)\,, \quad (t,k)\in [0,\infty)\times\Z\,.
$$ 
Then we are  looking for a solution of
\begin{equation}\label{wp1}
 \frac{d}{dt} \rho_k(t) = e^{\alpha (\psi(t)-1)}\Big( e^{-\alpha} \rho_{k-1}^2(t) - \rho_k^2(t) \Big)\,, \quad t\in\mathcal{J}\,,
\end{equation}
satisfying
\begin{equation}
\rho_k\big((n+\theta)^-\big) = e^{-\alpha} \rho_{k-1}(n+\theta)\,, \qquad n\in\N\,, \ k\in\Z\,, \label{phl6}
\end{equation}
and
\begin{equation}\label{wp2}
 \sum_{k\in\Z} e^{\alpha k} \rho_k(t)=m_0(\theta) \qquad \mbox{ for all } t \,.
\end{equation}
Introducing $R(\rho):=(R_k(\rho))_{k\in\Z}$ with $R_k(\rho) := e^{-\alpha} \rho_{k-1}^2 - \rho_k^2$, one has 
$$
\left\| R(\rho) - R(\sigma) \right\|_{l_1(e^{\alpha k})} \le 2 \left( \|\rho\|_{l_\infty} + \|\sigma\|_{l_\infty} \right) \left\| \rho - \sigma \right\|_{l_1(e^{\alpha k})}
$$
for any $\rho=(\rho_k)_k$ and $\sigma=(\sigma_k)$ in $\mathcal{Y}$. Consequently, a standard fixed point argument implies that there is $T\in (0,\theta)$ small enough such that there is a unique solution to \eqref{wp1} with initial condition $\left( e^{\alpha\theta} \p_k(0) \right)_k$ in
$$
\mathcal{M}_T:=\{ \rho=\big(\rho_k\big)_k \in C^0([0,T]; l_1(e^{\alpha k})\big)\ :\  \| \rho \|_{l_\infty}\leq 2c_0\}\,.
$$
Nonnegativity of each component $\rho_k$, $k\in\Z$, of the solution follows from the 
maximum principle. In order to show that $\rho$ satisfies \eqref{wp2} in $[0,T]$ we use \eqref{wp1} to compute, for $M\ge 1$, 
$$
\frac{d}{dt} \sum_{k=-M}^M e^{\alpha k} \rho_k(t) = e^{\alpha (\psi(t)-1)} \Big( e^{-\alpha(M+1)} \rho_{-(M+1)}^2(t) - e^{\alpha M}\rho_M^2(t)\Big) \,,
$$
so that
\begin{equation}
\frac{d}{dt} \sum_{k=-M}^M e^{\alpha k} \rho_k(t) \ge - e^{\alpha M}\rho_M^2(t)\,, \label{phl7}
\end{equation}
and, since $\rho\in\mathcal{M}_T$, 
$$
\frac{d}{dt} \sum_{k=-M}^M e^{\alpha k} \rho_k(t)  \leq e^{\alpha (\psi(t)-1)} e^{-\alpha(M+1)}  \rho_{-(M+1)}^2(t) \leq 4c_0^2 e^{-\alpha (M+1)}\,. 
$$
Integrating the previous inequality and letting $M \to \infty$ we obtain that
$$
\sum_{k\in\Z} e^{\alpha k} \rho_k(t) \leq m_0 \quad\mbox{ for all }\;\; t \in [0,T]\,.
$$
In particular, $\rho_M(t) \leq m_0 e^{-\alpha M}$ for $t\in [0,T]$ and we improve the lower bound \eqref{phl7} to 
$$
\frac{d}{dt} \sum_{k=-M}^M e^{\alpha k} \rho_k(t) \ge - e^{-\alpha M} m_0^2\,.
$$
Integrating the above inequality and letting $M\to\infty$ give 
$$
\sum_{k\in\Z} e^{\alpha k} \rho_k(t) \geq m_0 \quad\mbox{ for all }\;\; t \in [0,T]\,,
$$
and, as a consequence, the solution $\rho$ satisfies \eqref{wp2} in $[0,T]$.

It remains to show that \eqref{wp0} is satisfied in $[0,T]$ from which it follows that we can extend the solution to the interval $[0,\theta]$. We can then repeat the argument in all subsequent intervals $[n+\theta, n+\theta +1)$, $n\in\N$.

To prove \eqref{wp0} we use a maximum principle argument and note that, given $\gamma\in C^1([0,T])$ and $M\in\N$, it follows from \eqref{wp1} that
\begin{align*}
\frac{d}{dt} \sum_{k=-\infty}^M e^{\alpha k} \big( \rho_k - \gamma \big)_+ & = \sum_{k=-\infty}^M e^{\alpha (k-1)} \left( \rho_{k-1}^2 - \gamma^2 \right) \mathrm{sign}_+(\rho_k-\gamma) e^{\alpha(\psi-1)} \\
& \qquad + \sum_{k=-\infty}^M e^{\alpha k} \left( e^{-\alpha}-1 \right) \gamma^2 \mathrm{sign}_+(\rho_k-\gamma) e^{\alpha(\psi-1)} \\
& \qquad - \sum_{k=-\infty}^M e^{\alpha k} \left( \rho_k^2 - \gamma^2 \right)_+ e^{\alpha(\psi-1)} \\
& \qquad - \sum_{k=-\infty}^M e^{\alpha k}  \mathrm{sign}_+(\rho_k-\gamma) \frac{d\gamma}{dt} \end{align*}
Choosing $\gamma$ such that
$$
\frac{d\gamma}{dt} = -e^{\alpha(\psi-1)} (1-e^{-\alpha}) \gamma^2\,, \qquad \gamma(0)=e^{\alpha \theta} C_0\,,
$$
we realize that
\begin{align*}
\frac{d}{dt} \sum_{k=-\infty}^M e^{\alpha k} \big( \rho_k - \gamma \big)_+ & \le \sum_{k=-\infty}^{M-1} e^{\alpha k} \left( \rho_k^2 - \gamma^2 \right)_+ e^{\alpha(\psi-1)} \\
& \qquad - \sum_{k=-\infty}^M e^{\alpha k} \left( \rho_k^2 - \gamma^2 \right)_+ e^{\alpha(\psi-1)} \le 0\,,
\end{align*}
hence
$$
\sum_{k=-\infty}^M e^{\alpha k} \big( \rho_k - \gamma \big)_+(t) \le \sum_{k=-\infty}^M e^{\alpha k} \big( \rho_k - \gamma \big)_+(0)=0\,, \qquad t\in [0,T]\,.
$$
As $M\in\N$ is arbitrary we conclude that
$$
\rho_k(t)\le \gamma(t)\,, \qquad (t,k)\in [0,T]\times\Z\,.
$$
Since $\psi(t,\theta)-1=t-\theta$ for $t \in (0,\theta)$,  the function $\gamma$ is given by
$$
\gamma(t) = \frac{\alpha C_0 e^{\alpha\theta}}{\alpha  +C_0 (e^{\alpha t} -1)(1{-}e^{-\alpha})}\,, \qquad t\in [0,T]\,,
$$
and we end up with
$$
\varphi_k(t) = e^{\alpha(t-\theta)} \rho_k(t) \le e^{\alpha (t-\theta)} \gamma(t) \le \sup_{\tau\ge 0} \left\{ \frac{\alpha C_0 e^{\alpha\tau}}{\alpha + C_0 (e^{\alpha\tau} -1)(1{-}e^{-\alpha})} \right\} = c_0
$$
for $(t,k)\in [0,T]\times\Z$, which proves the claim.
\end{proof}

To proceed further, we set
\begin{equation}\label{thm15}
 L^{\theta}(t):= \sum_k e^{\alpha k} \Big(\p^{\theta}_{k}(t)- {\bar \p}^{\theta}_{k}(t)\Big)_+ \,, \qquad t\ge 0\,,
\end{equation}
which is a right-continuous function of time, and notice that \eqref{thm13} and \eqref{thm13b} give
\begin{equation}\label{thm16}
  L^{\theta}((n+\theta)^-)=  e^{\alpha} L^{\theta}((n+\theta))\,. 
\end{equation}
Our goal is to show that $L^{\theta}(t) \to 0$ as $t \to \infty$. This will follow from the fact that $L^{\theta}$ is almost a Lyapunov functional and that we can provide a lower bound on the
dissipation functional. Towards that aim we first prove a tightness result.

\begin{lemma}\label{L.tightness}
Given $\eps>0$ and $\theta\in [0,1)$ such that $m_0(\theta)<\infty$, let $M=M(\theta,\eps)>0$ be such that 
\begin{equation}
\sum_{k \geq M} e^{\alpha k} h_0(k+\theta) < \eps\,. \label{phl8}
\end{equation} 
Then there exists $N=N(\eps,M,m_0(\theta),\|h_0\|_{L^{\infty}})$ such that
\begin{equation}\label{tightness}
 \sum_{|k| \geq N} e^{\alpha k} \p^{\theta}_{k}(t) \leq 2e^{\alpha}\eps   \qquad \mbox{ for all } t\in [0,\infty)\,,
\end{equation}
and
\begin{equation}
\sum_{|k| \geq N} e^{\alpha k} {\bar \p}^{\theta}_{k}(t) \le 2e^{\alpha}\eps   \qquad \mbox{ for all } t\in [0,\infty)\,. \label{phl9}
\end{equation}
\end{lemma}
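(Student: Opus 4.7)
The plan is to split $\{|k|\ge N\} = \{k\le -N\}\cup\{k\ge N\}$ and control the four resulting tails separately. The two negative tails are immediate from uniform $L^\infty$ bounds: Lemma~\ref{L.wp} yields $\varphi^\theta_k(t)\le c_0$, while $\bar h$ is bounded above by its limit $\alpha/(1-e^{-\alpha})$ at $-\infty$ (from \eqref{S2E4} and monotonicity), so both $\sum_{k\le -N} e^{\alpha k}\varphi^\theta_k(t)$ and $\sum_{k\le -N} e^{\alpha k}\bar\varphi^\theta_k(t)$ are dominated by a geometric series $Ce^{-\alpha N}/(1-e^{-\alpha})$ which can be made as small as we like by taking $N$ large. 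The positive tail of $\bar\varphi^\theta_k$ is also straightforward: combining \eqref{S2E5} with $\psi\in[0,1)$ yields $\bar\varphi^\theta_k(t)\le C\exp(-L\, 2^{k-\lambda(\theta)})$, whose exponentially-weighted series decays faster than any exponential in $N$.

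The substantive task is the positive tail $U_N(t):=\sum_{k\ge N} e^{\alpha k}\varphi^\theta_k(t)$. I start from the pointwise bound $\varphi^\theta_k(t)\le m_0(\theta)\, e^{-\alpha k}$, read off from \eqref{thm14b} (each nonnegative summand is bounded by the full sum), which gives the crucial source decay $e^{\alpha(N-1)}(\varphi^\theta_{N-1})^2(t)\le m_0(\theta)^2\, e^{-\alpha(N-1)}$. To exploit this uniformly in time I rescale by $V_N(t):= e^{-\alpha(\psi(t,\theta)-1)} U_N(t)\ge U_N(t)$; then \eqref{thm14} shows that the growth term $\alpha U_N$ is absorbed by the normalization, and $V_N$ satisfies
\[
\frac{dV_N}{dt}\ =\ e^{-\alpha(\psi-1)}\,e^{\alpha(N-1)}\big(\varphi^\theta_{N-1}\big)^2\ \ge\ 0
\]
between consecutive jumps, while the jump identity \eqref{thm13b}, applied to the full sums, translates into the shift relation $V_N(n+\theta) = V_{N+1}((n+\theta)^-)$ for every $n\in\N$.

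Iterating this shift relation backwards through $n$ jumps gives
\[
V_N(n+\theta)\ =\ V_{N+n+1}(0)\ +\ \sum_{K=N}^{N+n}\int_{I_K} e^{-\alpha(\psi(s,\theta)-1)}\,e^{\alpha K}\,\big(\varphi^\theta_K\big)^2(s)\,ds,
\]
where the $I_K$ are disjoint subintervals of $[0,n+\theta]$, each of length at most $1$. The initial term is bounded by $e^\alpha U_{N+n+1}(0)\le e^\alpha\eps$ whenever $N\ge M$, by \eqref{phl8}, and each integral is bounded by $e^\alpha m_0(\theta)^2\, e^{-\alpha K}$, so the full sum is at most $e^\alpha m_0(\theta)^2\, e^{-\alpha N}/(1-e^{-\alpha})$, uniformly in $n$. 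Choosing $N$ large enough (depending only on $\eps$, $M$, $m_0(\theta)$, and $\|h_0\|_{L^\infty}$) this latter contribution is absorbed into $\eps$, hence $V_N(n+\theta)\le 2e^\alpha\eps$; monotonicity of $V_N$ between jumps together with $U_N\le V_N$ extends this estimate to all $t\ge 0$. Combining with the negative tail estimate yields \eqref{tightness}, and \eqref{phl9} follows from the $\bar\varphi$ bounds above, enlarging $N$ if necessary.

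The main difficulty is that a direct Gronwall estimate on $U_N$ itself does not deliver a bound uniform in $t$: the combined effect of the exponential growth $\alpha U_N$ and the coagulation source adds an $O(e^{-\alpha N})$ increment to $U_N(n+\theta)$ at each jump, which accumulates linearly in $n$. The rescaling to $V_N$ neutralizes the growth, and the index shift at jumps is what lets the accumulated sources telescope into a convergent geometric series in $N$, so that smallness of the initial tail propagates indefinitely.
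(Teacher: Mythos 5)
Your argument is correct, and it reaches the same decomposition into negative tails (controlled by $L^\infty$ bounds), the $\bar\p$-tail (controlled by \eqref{S2E5}), and the positive $\p$-tail; but for the latter — the only substantive part — you take a genuinely different route from the paper. The paper constructs a supersolution for the quantiles $Q_k(t) = \sum_{\ell\ge k} e^{\alpha\ell}\p_\ell(t)$: it shows $Q_k$ satisfies $\dot Q_k = \alpha Q_k + e^{-\alpha(k-1)}(Q_{k-1}-Q_k)^2$ with the jump rule $Q_k((n+\theta)^-)=e^\alpha Q_{k-1}(n+\theta)$, then builds $\bar Q_k(t):=\sum_{\ell\ge k}e^{\alpha\ell}\bar\p_{\ell-\ell_0}(t)+\eps e^{\alpha\psi(t)}$ (with a large left shift $\ell_0$ to dominate the initial datum for $k\le M$ and the $\eps$-term to dominate for $k>M$), and propagates the order $Q_k\le\bar Q_k$ via a sign-preservation argument for $W_k=Q_k-\bar Q_k$; the final tail bound is then read off the explicit decay of $\bar Q_k$. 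You instead work directly with $U_N=\sum_{k\ge N}e^{\alpha k}\p_k$ and its rescaling $V_N=e^{-\alpha(\psi-1)}U_N$, which kills the linear growth term and leaves only the positive source $e^{-\alpha(\psi-1)}e^{\alpha(N-1)}\p_{N-1}^2$; the jump relation $V_N(n+\theta)=V_{N+1}((n+\theta)^-)$ then lets you unroll $V_N(n+\theta)$ as $V_{N+n+1}(0)$ plus a telescoping sum of source integrals with indices $N,N+1,\dots,N+n$ over disjoint unit intervals, each bounded by $e^\alpha m_0(\theta)^2 e^{-\alpha K}$ using the crude pointwise bound $\p_K\le m_0(\theta)e^{-\alpha K}$ from \eqref{thm14b}. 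Your accounting is accurate: the jump relation $V_N(n+\theta)=V_{N+1}((n+\theta)^-)$ is exactly what \eqref{thm13b} gives, the geometric summability of the sources is right, and $U_N\le V_N$ closes the loop. The advantage of the paper's comparison argument is that it packages the uniformity in $t$ into a single order relation and is perhaps easier to adapt if the nonlinearity changes; the advantage of your telescoping computation is that it is fully explicit and makes transparent the mechanism — the index-shift at jumps offsetting the accumulation of sources — which the supersolution construction somewhat obscures. The only bookkeeping point to tighten is that you split the constant $2e^\alpha\eps$ across the negative tail, the initial tail $V_{N+n+1}(0)$, the telescoping sum, and the intra-interval increment; with the bounds as you state them these add up to slightly more than $2e^\alpha\eps$, but this is a constant-chase that is fixed either by starting from \eqref{phl8} with $\eps/2$ or by taking $N$ one step larger.
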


\begin{proof}
Within this proof we again omit the dependence on $\theta$ in the notation.

We first notice that the uniform bound \eqref{wp0} and the boundedness of $\bar h$ imply that there exists $N_1\in \N$ such that 
$$
\sum_{k \leq -N_1} e^{\alpha k} \p_k(t) + \sum_{k \leq -N_1} e^{\alpha k} {\bar \p}_k(t) \leq \eps\,, \qquad t\in [0,\infty)\,.
$$ 

In order to control the mass at large positive $k$ we construct a supersolution for the quantiles. More precisely, for $t\ge 0$, we define
$$
 Q_{k}(t):=  \sum_{\ell \geq k} e^{\alpha \ell} \p_{\ell}(t)
\qquad \mbox{ and } \qquad 
 {\bar Q}_{k}(t):=  \sum_{\ell \geq k} e^{\alpha \ell} {\bar \p}_{\ell-\ell_0}(t) + \eps e^{\alpha \psi(t)}
$$
with some sufficiently large $\ell_0 \in \N$ that will be determined later. Owing to \eqref{thm14} we  note that $Q_{k}$ solves
$$
 \frac{d}{dt} Q_{k}= \alpha Q_{k} + e^{-\alpha(k-1)}\Big( Q_{k-1}-Q_{k}\Big)^2 \,, \qquad t\in \mathcal{J}\,, 
$$
and ${\bar Q}_{k}$ solves the same equation for $k\in\Z$. Furthermore, by \eqref{thm13} and  \eqref{thm13b}, we have 
$$
Q_{k}((n+\theta)^-)= e^{\alpha} Q_{k-1}((n+\theta)) \;\;\mbox{ and }\;\; {\bar Q}_{k}((n+\theta)^-)= e^{\alpha} {\bar Q}_{k-1}((n+\theta))
$$
for $n\in\N$ and $k\in\Z$. We also observe that, by \eqref{thm14b}, 
$$
\lim_{k\to -\infty} Q_{k}(t) = e^{\alpha (\psi(t)-1)}m_0 \quad\mbox{ and }\quad \lim_{k\to -\infty} \bar Q_k(t) = e^{\alpha (\ell_0+\psi(t))} m_0+\eps e^{\alpha \psi(t)}\,, \quad t\ge 0\,.
$$

For the difference $W_{k}:=Q_{k} - \bar Q_{k}$ we obtain
\begin{equation}\label{Weq}
\frac{d}{dt} W_{k}= \alpha W_{k} + e^{-\alpha (k-1)} \big( W_{k-1} - W_k \big) \big( \bar Q_{k-1} - \bar Q_{k} + Q_{k-1} - Q_{k}\big)\,, \quad (t,k)\in\mathcal{J}\times\Z\,.
\end{equation}
Since
$$
\bar Q_{k-1} - \bar Q_{k} + Q_{k-1} - Q_{k} = e^{\alpha(k-1)} \bar{\p}_{k-1-\ell_0} + e^{\alpha(k-1)} \p_{k-1}\,,
$$
we infer from \eqref{thm13} and \eqref{wp0} that
\begin{equation}
0 \le \bar Q_{k-1} - \bar Q_{k} + Q_{k-1} - Q_{k} \le \left( c_0 + \|\bar h\|_{L^\infty} \right) e^{\alpha (k-1)}\,, \qquad k\in\Z\,. \label{phl10}
\end{equation} 
In addition,
$$
\lim_{k\to -\infty} W_{k}(t) = m_0 e^{\alpha(\psi(t)-1)} (1-e^{\alpha\ell_0}) - \eps e^{\alpha \psi(t)} \le - \eps\,, \qquad t\in [0,\infty)\,,
$$
so that 
$$
\sum_{k=-\infty}^K (W_k)_+ = \sum_{k=-K}^K (W_k)_+
$$
for $K\in\N$ large enough. We then deduce from \eqref{Weq} and \eqref{phl10} that
\begin{align*}
\frac{d}{dt} \sum_{k=-\infty}^K (W_k)_+ & \le \alpha \sum_{k=-\infty}^K (W_k)_+ + \sum_{k=-\infty}^K e^{-\alpha(k-1)} [ \bar Q_{k-1} - \bar Q_{k} + Q_{k-1} - Q_{k} ] (W_{k-1})_+ \\
& \le \left( \alpha + c_0 + \|\bar h\|_{L^\infty} \right) \sum_{k=-\infty}^K (W_k)_+\,,
\end{align*}
and thus, after integration,
\begin{equation}
\sum_{k=-\infty}^K (W_k)_+(t) \le \sum_{k=-\infty}^K (W_k)_+((n-1+\theta)_+)\,, \qquad t\in [(n-1+\theta)_+, n+\theta)\,, \ n\in\N\,. \label{phl11}
\end{equation}
We now choose $\ell_0=\ell_0(M,m_0)$ so large such that
$$
 \bar Q_{k}(0) \geq 2 m_0 \geq Q_{k}(0) \qquad \mbox{ for all } k \leq M\,.
$$
Also, by \eqref{phl8}, we have
$$
\bar Q_k(0) \geq \eps \geq Q_k(0) \qquad \mbox{ for all } k>M\,.
$$
With this choice of $\ell_0$, we conclude that $W_k(0)\le 0$ for all $k\in\Z$, hence $W_k(t)\le 0$ for $t\in [0,\theta)$ and $k\in\Z\cap (-\infty, K]$ by \eqref{phl11}. Since $K\in\N$ is arbitrary we have thus proved that $Q_k(t) \le \bar Q_k(t)$ for $t\in [0,\theta)$ and $k\in\Z$. In particular, 
$$
e^\alpha Q_{k-1}(\theta) = Q_k(\theta_-)\le \bar Q_k(\theta_-) = e^\alpha \bar Q_{k-1}(\theta)\,,
$$
which allows us to iterate the above argument and end up with 
$$
Q_k(t) \le \bar Q_k(t)\,, \qquad (t,k)\in [0,\infty)\times\Z\,.
$$
Now, according to \eqref{thm13}, $\bar Q_k$ is $1$-periodic and, for $t\in [0,1)$, we infer from \eqref{S2E5} that
$$
\bar Q_k(t) \le C \sum_{\ell\ge k} e^{\alpha\ell} \exp\left( - L 2^{\ell-\ell_0+1-\lambda-\psi(t)} \right) \le (2 e^{\alpha} - 1) \eps
$$
and
$$
\sum_{\ell\ge k} e^{\alpha \ell} {\bar \p}_k(t) \le C \sum_{\ell\ge k} e^{\alpha\ell} \exp\left( - L 2^{\ell+1-\lambda-\psi(t)} \right) \le (2 e^{\alpha} - 1) \eps
$$
for $k\ge N_2(\theta)$ sufficiently large. Choosing $N=\max\{N_1,N_2\}$ finishes the proof.
\end{proof}

\begin{lemma}\label{L.lyapunov}
Consider $\theta\in [0,1)$ such that $m_0(\theta)<\infty$. For $t\in\mathcal{J}^\theta$ we have
$$
\frac{d}{dt} L^{\theta}(t) =\alpha L^{\theta}(t) -D^{\theta}(t) \,,
$$
where
$$
D^{\theta} := \sum_{k\in\Z} e^{\alpha k} \big( \p^{\theta}_{k} +{\bar \p}^{\theta}_{k} \big) w^{\theta}_{k} \big[ \mathrm{sign}_+(w^{\theta}_{k}) - \mathrm{sign}_+(w^{\theta}_{k+1})\big] \geq 0\,,
$$
and $w^{\theta}_{k}:=\p^{\theta}_{k} - {\bar \p}^{\theta}_{k}$, $k\in\Z$.
\end{lemma}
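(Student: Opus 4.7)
The plan is to subtract the common equation \eqref{thm14} satisfied by $\varphi_k^\theta$ and $\bar\varphi_k^\theta$ in order to obtain an ODE for $w_k^\theta := \varphi_k^\theta - \bar\varphi_k^\theta$ on $\mathcal{J}^\theta$, namely
\begin{equation*}
\frac{d w_k^\theta}{dt} = \alpha w_k^\theta + e^{-\alpha}\bigl(\varphi_{k-1}^\theta + \bar\varphi_{k-1}^\theta\bigr) w_{k-1}^\theta - \bigl(\varphi_k^\theta + \bar\varphi_k^\theta\bigr) w_k^\theta\,.
\end{equation*}
Since the map $x\mapsto x_+$ is not differentiable at $0$, I would introduce a family of convex nondecreasing functions $j_\eps \in C^1(\R)$ with $j_\eps(x)\to x_+$ and $j_\eps'(x)\to \mathrm{sign}_+(x)$ pointwise as $\eps\to 0$, and $0\le j_\eps'\le 1$, and work with the regularized sum $L_\eps^\theta(t) := \sum_{k\in\Z} e^{\alpha k} j_\eps(w_k^\theta(t))$.

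The key computation is then to differentiate $L_\eps^\theta$ term-by-term on $\mathcal{J}^\theta$, substitute the ODE above, and rearrange. In the contribution coming from the gain term $e^{-\alpha}(\varphi_{k-1}^\theta+\bar\varphi_{k-1}^\theta) w_{k-1}^\theta$, I would shift the index $k\mapsto k+1$, so that after collecting the two coagulation contributions one obtains
\begin{equation*}
\frac{d L_\eps^\theta}{dt} = \alpha \sum_{k\in\Z} e^{\alpha k} j_\eps'(w_k^\theta) w_k^\theta + \sum_{k\in\Z} e^{\alpha k}\bigl(\varphi_k^\theta+\bar\varphi_k^\theta\bigr) w_k^\theta \bigl[ j_\eps'(w_{k+1}^\theta) - j_\eps'(w_k^\theta) \bigr]\,.
\end{equation*}
Passing to the limit $\eps\to 0$ and using $j_\eps'(w)w\to w_+$ together with $j_\eps'(w)\to \mathrm{sign}_+(w)$ pointwise gives exactly $\frac{d L^\theta}{dt}=\alpha L^\theta - D^\theta$.

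The nonnegativity of $D^\theta$ is then a term-by-term check: the factor $\varphi_k^\theta+\bar\varphi_k^\theta$ is nonnegative, while if $w_k^\theta>0$ the bracket $\mathrm{sign}_+(w_k^\theta)-\mathrm{sign}_+(w_{k+1}^\theta)\in\{0,1\}$ so the summand is $\ge 0$, and if $w_k^\theta\le 0$ the bracket lies in $\{0,-1\}$ and $w_k^\theta\le 0$, again giving a nonnegative contribution.

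The main technical obstacle is the justification of the term-by-term differentiation and of the passage to the limit $\eps\to 0$, since the series is infinite in both directions. The bounds required are nonetheless at hand: Lemma~\ref{L.wp} gives the uniform $\ell_\infty$-bound $\|\varphi^\theta(t)\|_{\ell_\infty}\le c_0$ together with a matching bound for $\bar\varphi^\theta$ coming from the boundedness of $\bar h$, while the conservation identity \eqref{thm14b} (and its analogue for $\bar\varphi_k^\theta$) provides the absolute summability $\sum_k e^{\alpha k}(\varphi_k^\theta+\bar\varphi_k^\theta)<\infty$, locally uniformly in $t\in\mathcal{J}^\theta$. Using $|w_k^\theta|\le \varphi_k^\theta+\bar\varphi_k^\theta$ and $|j_\eps'|\le 1$, each of the three sums appearing in $\frac{d L_\eps^\theta}{dt}$ is dominated by an $\eps$-independent summable majorant, so dominated convergence applies and yields the claimed identity.
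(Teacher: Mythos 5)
Your computation is correct and is precisely the ``simple explicit computation'' the paper invokes without writing out: subtracting the two copies of \eqref{thm14} to get the linearized equation for $w_k^\theta$, shifting the index in the gain term, and identifying $\alpha L^\theta-D^\theta$, with the $j_\eps$-regularization and the summable majorant from \eqref{wp0} and \eqref{thm14b} supplying the rigour for the term-by-term differentiation and the limit $\eps\to0$. The only residual caveat, shared with the paper's own statement, is that at times where some $w_k^\theta$ vanishes with nonzero derivative the identity should be read in integrated (or a.e.) form, which is all that is used later in Lemma~\ref{L.main}.
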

 
\begin{proof}
This is a simple explicit computation.
\end{proof}

\begin{lemma}\label{L.dissipation}
Consider $\theta\in [0,1)$ such that $m_0(\theta)<\infty$ and $\eps>0$. Let $N\in\N$ be such that \eqref{tightness} holds and let $\mathcal{T}_{\eps,N}^\theta$ be the set of solutions $q=(q_k)_{k\in\Z}$ to \eqref{thm14} on $(\theta,\theta+1)$ satisfying the mass equation \eqref{thm14b}, the uniform bound \eqref{wp0}, the tightness estimate \eqref{tightness}, and the lower bound
\begin{equation}
\sum_{k\in\Z} e^{\alpha k} \big( q_k(t) - {\bar \p}_k(t))_+ \ge 16 e^\alpha \eps \label{phl12}
\end{equation}
on the interval $[\theta,\theta+1)$.

Then there exists $\nu=\nu(\theta,N,\eps)>0$ such that
$$
\int_{\theta}^{\theta+1} \sum_{k\in\Z} e^{\alpha k} \big( q_{k}(t) +{\bar \p}_{k}(t) \big) \omega_{k}(t) \big[ \mathrm{sign}_+(\omega_{k}(t)) - \mathrm{sign}_+(\omega_{k+1}(t))\big]\,dt \geq \nu\,,
$$
where $\omega_k := q_k - \bar{\p}_k^\theta$, $k\in\Z$.
\end{lemma}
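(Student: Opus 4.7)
The plan is to argue by compactness and contradiction. Suppose no such $\nu > 0$ exists: then there is a sequence $(q^{(n)})_{n \ge 1}$ in $\mathcal{T}_{\eps,N}^\theta$ whose dissipation functionals $D^{(n)}$ (the integrand in the claimed inequality) satisfy $\int_\theta^{\theta+1} D^{(n)}(t)\,dt \to 0$.

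\textbf{Step 1 (Compactness).} The uniform $l_\infty$-bound \eqref{wp0} combined with the ODE \eqref{thm14} yields a uniform Lipschitz bound on each component $q_k^{(n)}$ on $[\theta,\theta+1]$. Arzelà--Ascoli together with a diagonal extraction over $k\in\Z$ produces (after relabelling) a limit $q^\infty=(q_k^\infty)_{k\in\Z}$ with $q_k^{(n)}\to q_k^\infty$ uniformly on $[\theta,\theta+1]$ for each $k$. Passing to the limit in \eqref{thm14} shows that $q^\infty$ solves the same ODE on $(\theta,\theta+1)$; it inherits the $l_\infty$ bound \eqref{wp0} and the tightness \eqref{tightness}. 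Because of the fixed tightness constant $2e^\alpha\eps$ in $\mathcal{T}_{\eps,N}^\theta$, the mass identity \eqref{thm14b} passes only approximately: $\bigl|\sum_k e^{\alpha k} q_k^\infty(t) - e^{\alpha(\psi(t,\theta)-1)} m_0(\theta)\bigr|\le 2 e^\alpha \eps$.

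\textbf{Step 2 (Dissipation vanishes in the limit).} Since $D^{(n)}\ge 0$ and $\int D^{(n)}\to 0$, we have $D^{(n)}\to 0$ in $L^1(\theta,\theta+1)$, hence along a further subsequence $D^{(n)}(t)\to 0$ for a.e.\ $t\in[\theta,\theta+1]$. Fix such a $t$. Every summand of $D^{(n)}(t)$ is nonnegative, so each tends to $0$. The strict positivity $\bar\varphi_k^\theta(t)>0$ (consequence of $\bar h>0$, cf.\ \eqref{S2E5}) together with the pointwise convergence $q_k^{(n)}(t)\to q_k^\infty(t)$ then yields the sign rigidity for $w_k^\infty:=q_k^\infty-\bar\varphi_k^\theta$: $w_k^\infty(t)>0 \Rightarrow w_{k+1}^\infty(t)\ge 0$ and $w_k^\infty(t)<0 \Rightarrow w_{k+1}^\infty(t)\le 0$. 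Hence for a.e.\ $t$ there exists $k_0(t)\in\Z\cup\{\pm\infty\}$ with $w_k^\infty(t)\le 0$ for $k<k_0(t)$ and $w_k^\infty(t)\ge 0$ for $k\ge k_0(t)$.

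\textbf{Step 3 (Identification and contradiction).} The sign rigidity together with the ODE \eqref{thm14} for $q^\infty$ and the time-continuity of $w^\infty$ is used to identify $q^\infty$ with a traveling wave of the form $q_k^\infty(t)=\bar h(k+c(t)-t)$, i.e.\ a translate of the stationary profile. The approximate mass balance from Step 1 then pins down $|c(t)-c_0(\theta)|\le C\eps$, where $c_0(\theta):=\theta+1-\lambda(\theta)$ is the shift corresponding to $\bar\varphi^\theta$, and the continuity of translations on $\bar h\in L^1(\R;e^{\alpha x}\,dx)$ gives $\sum_k e^{\alpha k}|q_k^\infty(t)-\bar\varphi_k^\theta(t)|\le C\eps$. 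On the other hand, the pointwise lower bound \eqref{phl12} combined with the tightness estimates for both $q^{(n)}$ and $\bar\varphi^\theta$ passes to the limit as $\sum_k e^{\alpha k}(w_k^\infty(t))_+\ge 14 e^\alpha \eps$ for a.e.\ $t$. These two estimates are incompatible once $\eps$ is small compared to $1/C$ (or, more carefully, by tracking constants one deduces a contradiction for arbitrary $\eps$), which produces the desired $\nu(\theta,N,\eps)>0$.

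\textbf{Main obstacle.} The crux is Step 3: the local sign rigidity from Step 2 alone admits configurations of $w^\infty$ with a single sign-change index $k_0$ that need not be a translate of $\bar h$, so the approximate mass conservation, which only controls the signed sum $\sum_k e^{\alpha k} w_k^\infty$, cannot by itself bound $\sum_k e^{\alpha k}(w_k^\infty)_+$. The identification of $q^\infty$ as a traveling wave must therefore exploit the ODE \eqref{thm14}, the $t$-continuity of $w^\infty$, and the precise structure of $\bar h$ (strict monotonicity, strict positivity, super-exponential decay at $+\infty$ from \eqref{S2E5}, and the decay toward the constant $\alpha/(1-e^{-\alpha})$ at $-\infty$ from \eqref{S2E4}) to exclude non-traveling-wave profiles.
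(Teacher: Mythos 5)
There is a genuine gap, and you have correctly located it yourself: Step~3. The identification of the limit $q^\infty$ as a translate of $\bar h$ is the entire difficulty of the lemma, and the proposal gives no mechanism for carrying it out; moreover, such a classification is essentially the uniqueness of stationary profiles, which in the paper is a \emph{consequence} (Corollary~\ref{cor2}) of Theorem~\ref{thm1} and hence of this very lemma, so building the proof on it risks circularity. The paper avoids any identification of the limit. After the compactness step (your Step~1, which matches the paper, including the observation that the mass identity passes only approximately, cf.\ \eqref{west1}), it establishes the full sign rigidity $W_k\left[\mathrm{sign}_+(W_k)-\mathrm{sign}_+(W_{k+1})\right]=0$ a.e.\ for $|k|\le N$ and then argues \emph{directly}: the quantitative bounds \eqref{west1}--\eqref{west2} show that at any time $t_*$ the signs of $(W_k(t_*))_{|k|\le N}$ can be neither all nonnegative nor all nonpositive, so there is an index with either $W_{k_*}(t_*)>0$, $W_{k_*+1}(t_*)\le 0$, or $W_{k_1-1}(t_*)<0$, $W_{k_1}(t_*)\ge 0$; the ODE \eqref{S8E6} then forces this forbidden configuration to hold \emph{strictly} on an interval $[t_*-\delta,t_*)$ of positive measure, contradicting the a.e.\ rigidity. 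No property of $\bar h$ beyond positivity and the already-proved tightness enters; in particular no traveling-wave classification and no ``$\eps$ small'' restriction is needed.

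There is also a flaw in Step~2 that feeds into Step~3. Because $\mathrm{sign}_+$ is discontinuous at $0$, letting each summand of $D^{(n)}(t)$ tend to zero only excludes the strict configurations ($w_k^\infty>0$, $w_{k+1}^\infty<0$ and $w_k^\infty<0$, $w_{k+1}^\infty>0$); the borderline case $w_k^\infty(t)>0$, $w_{k+1}^\infty(t)=0$ survives the limit, since one may have $W_{k+1}^{(n)}(t)>0$ for all $n$. Your stated rigidity correctly reflects this weaker conclusion, but then the asserted single-transition structure (one $k_0(t)$ with $w^\infty\le 0$ below and $w^\infty\ge 0$ above) does not follow from it: the pattern $(+,0,-)$ is compatible with what you proved. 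The paper closes exactly this hole by showing that the set $\mathcal{B}_k=\{t:\ W_k(t)>0,\ W_{k+1}(t)=0\}$ consists of isolated points (again via \eqref{S8E6}: at such $t$ one has $\frac{d}{dt}W_{k+1}(t)>0$), hence has measure zero, which upgrades the rigidity to \eqref{S8E7b}. Both missing ingredients are short ODE arguments of the same flavor; without them the proposal does not close.
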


\begin{proof}
Throughout this proof we again omit the dependence on $\theta$. 

Assume for contradiction that there exists a sequence $(p^m)_{m\ge 1}$ in $\mathcal{T}_{\eps,N}$ such that
\begin{equation}
\lim_{m\to\infty} \int_\theta^{\theta+1} D_m(t)\, dt = 0\,, \label{phl13}
\end{equation}
where $p^m = (p_k^m)_{k \in \Z}$, $W_k^m := p_k^m - {\bar \p}_k$, and
$$
D_m := \sum_{k\in\Z} e^{\alpha k} \big( p^m_{k} +{\bar \p}_{k} \big) W^m_{k} \big[ \mathrm{sign}_+(W^m_{k}) - \mathrm{sign}_+(W^m_{k+1})\big]\,.
$$
Owing to \eqref{thm14} and \eqref{wp0}, $(p_k^m)_{m\ge 1}$ is bounded in $C^1([\theta,\theta+1))$ for all $k\in\Z$ and we can extract a subsequence, again denoted by $(p^m)_{m\ge 1}$ such that $(p_k^m)_{m\ge 1}$ converges uniformly in $[\theta,\theta+1)$ to a function $p_k$ for all $k\in\Z$. Setting $p:=(p_k)_{k\in\Z}$, we easily see that $p$ is a solution to \eqref{thm14} and satisfies the uniform bound \eqref{wp0} and the tightness bound \eqref{tightness} in $[\theta,\theta+1)$. In addition, due to the tightness property \eqref{tightness}, we deduce from the mass equation \eqref{thm14b} and the lower bound \eqref{phl12} which are valid for $p^m$ that
\begin{equation}\label{west1}
\Big| \sum_{k\in\Z} e^{\alpha k} p_k(t) - e^{\alpha (\psi(t)-1)} m_0\Big| \leq  4e^{\alpha} \eps\, \qquad \mbox{ and } \qquad 
\Big| \sum_{|k| \leq N} e^{\alpha k} W_k(t)\Big| \leq 4e^{\alpha}\eps\,,
\end{equation}
with $W_k := p_k - {\bar \p}_k$, as well as 
\begin{equation}\label{west2}
 \sum_{|k| \leq N} e^{\alpha k} \big(W_k\big)_+(t) \geq 8 e^{\alpha} \eps\,,
\end{equation}
for $t\in [\theta,\theta+1)$. 

Now, since each term in the sum $D_m$ is non-negative, there holds
\begin{equation}\label{limitdiss}
 \lim_{m \to \infty} \int_{\theta}^{\theta+1} b_k^m(t) \,dt =0\,, \quad k\in\Z\,,
\end{equation}
with
\begin{equation}\label{Bdef}
b_k^m := e^{\alpha k} \big( p_k^m+{\bar \p}_k\big) W^m_k \big( \mathrm{sign}_+(W^m_k) - \mathrm{sign}_+(W^m_{k+1})  \big)\,, \quad k\in\Z\,.
\end{equation}
It  remains to take the limit in \eqref{limitdiss}. Fix $k\in \Z\cap [-N,N]$. Introducing
$$
b_k := e^{\alpha k} \big( p_k +{\bar \p}_k\big) W_k \big[ \mathrm{sign}_+(W_k) - \mathrm{sign}_+(W_{k+1})  \big]\,, \quad k\in\Z\,,
$$
and the sets
\begin{align*}
\mathcal{B}_k & := \left\{ t\in (\theta,\theta+1)\ :\ W_k(t)>0 \;\mbox{ and }\; W_{k+1}(t)=0 \right\}\,, \\
\mathcal{C}_k & := \left\{ t\in (\theta,\theta+1)\ :\ W_k(t)<0 \;\mbox{ and }\; W_{k+1}(t)=0 \right\}\,, \\
\mathcal{G}_k & := (\theta,\theta+1)\setminus (\mathcal{B}_k \cup \mathcal{C}_k)\,, 
\end{align*}
we have $(\theta,\theta+1) = \mathcal{B}_k \cup \mathcal{C}_k \cup \mathcal{G}_k$ and we notice that 
$$
\lim_{m\to\infty} b_k^m(t) = b_k(t) \quad\mbox{ for }\;\; t\in \mathcal{G}_k\,,
$$
while
$$
b_k(t) \le \liminf_{m\to\infty} b_k^m(t) \quad\mbox{ for }\;\; t\in \mathcal{C}_k\,.
$$
Then, \eqref{limitdiss} implies that
\begin{equation}
0= \int_{\mathcal{C}_k\cup \mathcal{G}_{k}}
B_{k}(t) \,dt + \lim_{m \to \infty} \int_{\mathcal{B}_{k}} B_{k}^{m}(t)\,dt\,. \label{S8E5}
\end{equation}
 
We now claim that $\left\vert \mathcal{B}_{k}\right\vert =0$. To prove this we use the equation satisfied by $W_{k+1}$ which reads, due to \eqref{thm14},
\begin{equation}\label{S8E6}
\frac{d}{dt}W_{k+1}(t) =\alpha W_{k+1}(t) + e^{-\alpha}\left[ p_{k}(t) + \bar{\p}_{k}(t)  \right]  W_{k}(t) - \left[  p_{k+1}(t) + \bar{\p}_{k+1}(t) \right] W_{k+1}(t)
\end{equation}
for $t\in (\theta,\theta+1)$. Let $t_0\in \mathcal{B}_k$. By \eqref{S8E6}
$$
\frac{d}{dt}W_{k+1}(t_0) = e^{-\alpha}\left[ p_{k}(t_0) + \bar{\p}_{k}(t_0)  \right]  W_{k}(t_0)>0\,.
$$
Consequently, there is $\delta>0$ such that $\mathcal{B}_k \cap [t_0-\delta,t_0+\delta] =\{t_0\}$ and $\mathcal{B}_k$ contains only isolated points. This implies that $\left\vert \mathcal{B}_{k}\right\vert =0$ and therefore \eqref{S8E5} reduces to
\begin{equation}
0=\int_{\theta}^{\theta+1} b_{k}(t)\,dt\,. \label{S8E7}
\end{equation}
This in turn implies that
\begin{equation}
W_k \left[ \mathrm{sign}_+(W_k) - \mathrm{sign}_+(W_{k+1}) \right] = 0 \quad\mbox{ a.e. in }\; (\theta,\theta+1)\,, \quad k\in\Z\cap [-N,N]\,. \label{S8E7b}
\end{equation}

We will now show that the functions $\p_k$ and the corresponding $W_k$ constructed above cannot exist. Indeed, assume first that there is $t_* \in [\theta,\theta+1)$ such that $W_{-N}(t_*)  >0$. Due to \eqref{west1} and \eqref{west2}, $W_k(t_*)$ cannot be positive for all $k\in\Z \cap [-N,N]$ and we define
$$
k_*+1 := \min\{ k\in\Z \cap [-N,N]\ : \ W_k(t_*)\le 0 \} \le N\,.
$$
Then $W_{k_*}(t_*)>0$ and $W_{k_*+1}(t_*)\le 0$. Since 
$$
\frac{d}{dt}W_{k_*+1} =\alpha W_{k_*+1} + e^{-\alpha}\left[ p_{k_*} + \bar{\p}_{k_*}  \right]  W_{k_*} - \left[  p_{k_*+1} + \bar{\p}_{k_*+1} \right] W_{k_*+1}
$$
by \eqref{S8E6}, we realize that
$$
\frac{d}{dt} \left[ W_{k_*+1}(t) e^{-\alpha t} \right] \Big|_{t=t_*} > 0\,,
$$
so that 
$$
W_{k_*+1}(t) e^{-\alpha t} < W_{k_*+1}(t_*) e^{-\alpha t_*} \le 0\,, \quad t\in [t_*-\delta,t_*)\,,
$$
for some $\delta>0$. Reducing $\delta$ if necessary, we may also assume that $W_{k_*}(t)\ge W_{k_*}(t_*)/2$ for $t\in  [t_*-\delta,t_*)$. This implies that 
$$
W_{k_*}(t) \left[ \mathrm{sign}_+(W_{k_*}(t)) - \mathrm{sign}_+(W_{k_*+1}(t)) \right] = W_{k_*}(t) \ge \frac{W_{k_*}(t_*)}{2}>0
$$
for $t\in [t_*-\delta,t_*)$ and contradicts \eqref{S8E7b}. 

Therefore $W_{-N}(t)  \leq 0$ for all $t \in [\theta,\theta+1)$. We fix any value $t_*\in [\theta,\theta+1)$ and define 
$$
k_1 := \min\{ k\in \Z\cap [-N,N]\ :\ W_k(t_*)\ge 0 \}\,.
$$
Then $k_1>-N$ and \eqref{west2} guarantees that $k_1\le N$. Since $W_{k_1-1}(t_*)<0$ and $W_{k_1}(t_*)\ge 0$, we use once more \eqref{S8E6} to obtain
$$
\frac{d}{dt} \left[ W_{k_1}(t) e^{-\alpha t} \right] \Big|_{t=t_*} < 0\,.
$$
We may then find $\delta>0$ sufficiently small such that
$$
W_{k_1-1}(t) < \frac{W_{k_1-1}(t_*)}{2} < 0 < W_{k_1}(t)\,, \quad t\in [t_*-\delta,t_*)\,.
$$
This implies that 
$$
W_{k_1-1}(t) \left[ \mathrm{sign}_+(W_{k_1-1}(t)) - \mathrm{sign}_+(W_{k_1}(t)) \right] = - W_{k_1-1}(t) \ge - \frac{W_{k_1-1}(t_*)}{2}>0
$$
for $t\in [t_*-\delta,t_*)$ and contradicts again \eqref{S8E7b}. This concludes the proof.
\end{proof}
  
\begin{lemma}\label{L.main}
Let $\theta\in [0,1)$ be such that $m_0(\theta)<\infty$. The function $L^{\theta}$ defined in \eqref{thm15} satisfies
$$
\lim_{t\to\infty} L^{\theta}(t) = \lim_{t\to\infty} \sum_{k\in\Z} e^{\alpha k} \left| \p_k^\theta(t) - \bar{\p}_k^\theta(t) \right| = 0\,.
$$
\end{lemma}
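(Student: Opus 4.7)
The plan is to absorb the growth and jumps of $L^\theta$ into a single monotone quantity. Define
$$\tilde L^\theta(t) := e^{-\alpha \psi(t,\theta)}\, L^\theta(t)\,, \qquad t\ge 0\,.$$
On $\mathcal{J}^\theta$ one has $d\psi/dt = 1$, so Lemma~\ref{L.lyapunov} yields
$$\frac{d}{dt} \tilde L^\theta = e^{-\alpha \psi}\bigl(-\alpha L^\theta + \alpha L^\theta - D^\theta\bigr) = -e^{-\alpha \psi} D^\theta \le 0\,.$$
At a jump time $n+\theta$, $\psi$ drops from $1$ to $0$ while $L^\theta$ is divided by $e^\alpha$ according to \eqref{thm16}, and these two effects cancel exactly, so $\tilde L^\theta$ is continuous there. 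Hence $\tilde L^\theta$ is non-increasing and continuous on $[0,\infty)$, and it converges to some limit $L_\infty \ge 0$ as $t\to\infty$.

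I argue by contradiction that $L_\infty = 0$. Suppose $L_\infty > 0$. Since $L^\theta(t) = e^{\alpha \psi(t,\theta)} \tilde L^\theta(t) \ge \tilde L^\theta(t) \ge L_\infty$, the strict positivity is uniform in $t$. Fix $\eps>0$ so small that $16 e^\alpha \eps < L_\infty$. Lemma~\ref{L.tightness} applied to $h_0$ furnishes an $N\in\N$ for which \eqref{tightness} and \eqref{phl9} hold on $[0,\infty)$.

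For each $n\in\N$ consider the time-shifted sequence $t\mapsto \varphi_k^\theta(t+n)$ on $t\in [\theta,\theta+1)$. Because $\psi(\cdot,\theta)$ and $\bar\varphi_k^\theta$ are $1$-periodic in $t$, this shifted sequence still solves \eqref{thm14}, satisfies the mass identity \eqref{thm14b}, the sup-bound \eqref{wp0}, and the tightness estimate \eqref{tightness} on $[\theta,\theta+1)$. Moreover
$$\sum_{k\in\Z} e^{\alpha k} \bigl(\varphi_k^\theta(t+n) - \bar\varphi_k^\theta(t)\bigr)_+ = L^\theta(t+n) \ge L_\infty > 16 e^\alpha \eps\,,$$
so \eqref{phl12} also holds. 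The shifted sequence therefore belongs to $\mathcal{T}_{\eps,N}^\theta$, and Lemma~\ref{L.dissipation} produces a constant $\nu = \nu(\theta,N,\eps) > 0$, independent of $n$, with
$$\int_{n+\theta}^{n+1+\theta} D^\theta(t)\,dt \ge \nu\,.$$
Combining this with $d\tilde L^\theta/dt = -e^{-\alpha\psi(\cdot,\theta)} D^\theta$ and the bound $e^{-\alpha\psi} \ge e^{-\alpha}$ yields $\tilde L^\theta(n+1+\theta) \le \tilde L^\theta(n+\theta) - e^{-\alpha}\nu$, so that $\tilde L^\theta(n+\theta) \to -\infty$ as $n\to\infty$. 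This is absurd; hence $L_\infty = 0$ and $L^\theta(t)\to 0$.

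Finally, since $(\varphi_k^\theta)_k$ and $(\bar\varphi_k^\theta)_k$ both satisfy \eqref{thm14b} with the same right-hand side, one has $\sum_k e^{\alpha k}(\varphi_k^\theta - \bar\varphi_k^\theta)(t) = 0$, and then $|a| = 2(a)_+ - a$ gives $\sum_k e^{\alpha k}|\varphi_k^\theta - \bar\varphi_k^\theta|(t) = 2 L^\theta(t) \to 0$. The main obstacle is identifying the right renormalization $\tilde L^\theta = e^{-\alpha\psi} L^\theta$, which simultaneously neutralizes the exponential growth from the $\alpha L^\theta$ term in Lemma~\ref{L.lyapunov} and the multiplicative jump \eqref{thm16}, thereby turning $L^\theta$ into a genuine Lyapunov functional; once this is at hand, the dissipation estimate of Lemma~\ref{L.dissipation} suffices to drive the functional to zero via the above contradiction argument.
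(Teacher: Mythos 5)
Your proof is correct and follows essentially the same strategy as the paper: renormalize $L^\theta$ to obtain a genuine (monotone) Lyapunov functional, show that a positive limit forces the dissipation to be bounded below on each period via Lemmas~\ref{L.tightness} and~\ref{L.dissipation}, and derive a contradiction. The only cosmetic difference is that you multiply by $e^{-\alpha\psi(t,\theta)}$, which makes the Lyapunov functional continuous across the jump times $n+\theta$, whereas the paper works with $e^{-\alpha t}L^\theta$ and tracks the jumps through \eqref{thm16} and the discrete sequence $(L^\theta(n+\theta))_n$; the two renormalizations differ only by a locally constant factor and lead to the same estimates.
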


\begin{proof}
We first note that, for $t\in\mathcal{J}^\theta$, Lemma~\ref{L.lyapunov} ensures that
\begin{equation}
\frac{d}{dt} \left( L^\theta(t) e^{-\alpha t} \right) = - e^{\alpha t} D^\theta(t)\,. \label{phl50}
\end{equation}
After integration we find, for $n\in\N$,
$$
L^\theta((n+1+\theta)^-) + \int_{n+\theta}^{n+1+\theta} D^\theta(s)\,ds \le e^\alpha L^\theta(n+\theta)\,,
$$
which, together with \eqref{thm16}, gives
\begin{equation}
e^\alpha L^\theta((n+1+\theta)) + \int_{n+\theta}^{n+1+\theta} D^\theta(s)\,ds \le e^\alpha L^\theta(n+\theta)\,. \label{phl51}
\end{equation}
In particular, $(L^\theta(n+\theta))_{n\in\N}$ is a nonincreasing sequence and
\begin{equation}
\sum_{n\in\N} \int_{n+\theta}^{n+1+\theta} D^\theta(t)\,dt \le e^\alpha L^\theta(\theta)\,. \label{phl52}
\end{equation}

Assume for contradiction that
$$
\eta := \inf_{n\in\N} L^\theta(n+\theta) > 0\,.
$$
Then, owing to \eqref{thm16}, $L^\theta((n+\theta)^-)\ge e^\alpha \eta$ while we infer from \eqref{phl50} that
\begin{align*}
L^\theta(\theta^-) e^{\alpha(t-\theta)} & \le L^\theta(t) \,, \qquad t\in [0,\theta)\,, \\
L^\theta((n+1+\theta)^-) e^{\alpha(t-n-1-\theta)} & \le L^\theta(t) \,, \qquad t\in [n+\theta,n+1+\theta)\,,
\end{align*}
 for $n\in\N$. Combining these estimates gives
\begin{equation}
L^\theta(t)\ge \eta\,, \qquad t\in [0,\infty)\,. \label{phl53}
\end{equation} 
 
Consider now $\eps\in (0,\eta/(16e^\alpha))$. Since $m_0(\theta)<\infty$ there is $M=M(\theta)\in\N$ such that \eqref{phl8} holds true and it follows from Lemma~\ref{L.tightness} that there is $N=N(\theta,\eps)$ such that
\begin{equation}
\sum_{|k|\ge N} e^{\alpha k} \p_k^\theta(t) \le 2 e^\alpha \eps\,, \qquad t\in [0,\infty)\,. \label{phl54}
\end{equation} 
We next define 
$$
p_k^m(s) := \p_k^\theta(m+s)\,, \qquad (s,m,k)\in [\theta,\theta+1)\times\N\times\Z\,.
$$
Then, for each $m\in\N$, $p^m:=(p_k^m)_{k\in\Z}$ is a solution to \eqref{thm14} in $(\theta,\theta+1)$ which satisfies the mass equation \eqref{thm14b} in $[\theta,\theta+1)$. It also enjoys the uniform estimate \eqref{wp0} by Lemma~\ref{L.wp} as well as the tightness property \eqref{tightness} by \eqref{phl54}, still in $[\theta,\theta+1)$. Finally, for $t\in [\theta,\theta+1)$, 
$$
\sum_{k\in \Z} e^{\alpha k} \left( p_k^m(t) - {\bar \p}_k^\theta(t) \right)_+ = L^\theta(m+s) \ge \eta \ge 16 e^\alpha \eps
$$
by \eqref{phl53}. In other words, $p^m$ belongs to the set $\mathcal{T}_{\eps,N}^\theta$ defined in Lemma~\ref{L.dissipation} and we infer from Lemma~\ref{L.dissipation} that there is $\nu=\nu(\eps,\theta)>0$ such that 
$$
\int_\theta^{\theta+1} \sum_{k\in\Z} e^{\alpha k} \left( p_k^m(t) + \bar{\p}_k^\theta(t) \right) W_k^m(t) \left[ \mathrm{sign}_+(W_k^m(t)) - \mathrm{sign}_+(W_{k+1}^m(t))\right]\,dt \ge \nu\,, 
$$
where $W_k^m := p_k^m - \bar{\p}_k^\theta$, $(m,k)\in \N\times\Z$. In terms of $\varphi^\theta$, 
$$
\int_{m+\theta}^{m+1+\theta} D^\theta(t)\,dt \ge \nu\,, \qquad m\in\N\,.
$$
Summing up with respect to $m\in\N$ clearly contradicts \eqref{phl52}. Therefore $\eta=0$ and, since
$$
L^\theta(t) \le L^\theta(n+\theta) e^\alpha\,, \qquad t\in [n+\theta,n+1+\theta)\,, \ n\in\N\,,
$$
by \eqref{phl50}, we conclude that $L^\theta(t)\to 0$ as $t\to\infty$. 

Finally, let $t\ge 0$. We recall that
$$
\sum_{k\in\Z} e^{\alpha k} \p_k^\theta(t) = m_0(\theta) e^{\alpha(\psi(t,\theta)-1)} = \sum_{k\in\Z} e^{\alpha k} \bar{\p}_k^\theta(t)
$$
by \eqref{thm14b}, so that
\begin{align*}
\sum_{k\in\Z} e^{\alpha k} \left| \p_k^\theta(t) - \bar{\p}_k^\theta(t) \right| & = \sum_{k\in\Z} e^{\alpha k} \left[ 2 \left( \p_k^\theta(t) - \bar{\p}_k^\theta(t) \right)_+ - \left( \p_k^\theta(t) - \bar{\p}_k^\theta(t) \right) \right] \\
& = 2 L^\theta(t)\,,
\end{align*}
and the proof is complete.  
\end{proof}

\begin{proof}[{\bf Proof of Theorem~\ref{thm1}.}]
By Lemma~\ref{L.htophi}  
$$
\int_\R e^{\alpha x} \Big| h(t,x) - \bar h(x-\mu(t,x)) \Big| \,dx = \int_0^1 e^{\alpha(1-\psi(t,\theta))} \sum_{k\in\Z} e^{\alpha k} \Big| \varphi_k^\theta(t) - {\bar\varphi}^\theta_k(t) \Big| \,d\theta\,.
$$
On the one hand, since $m_0\in L^1(0,1)$, then $m_0(\theta)$ is finite for almost every $\theta\in (0,1)$ and, according to Lemma~\ref{L.main}, 
$$
\lim_{t\to\infty} \sum_{k\in\Z} e^{\alpha k} \left| \p_k^\theta(t) - \bar{\p}_k^\theta(t) \right| = 0
$$
for almost every $\theta\in (0,1)$. On the other hand, thanks to \eqref{thm14b},
$$
e^{\alpha(1-\psi(t,\theta))} \sum_{k\in\Z} e^{\alpha k} \Big| \varphi_k^\theta(t) - {\bar\varphi}^\theta_k(t) \Big| \le 2 m_0(\theta)\,.
$$ 
The Lebesgue dominated convergence then entails that 
$$
\lim_{t\to\infty} \int_\R e^{\alpha x} \Big| h(t,x) - \bar h(x-\mu(t,x)) \Big| \,dx = 0\,,
$$
which proves the claim.
\end{proof}

\section{A priori estimates for stationary solutions}
\label{S.apriori}

\begin{lemma}\label{L.regularity}
Let $\bar h \in C^{1}(\R)\cap L^1(\R;e^{\alpha x}\,dx)$ be a nonnegative solution to \eqref{heq3}. Then $\bar h \in L^{\infty}(\R)$.
\end{lemma}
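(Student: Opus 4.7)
The plan is to exploit the mass bound $\mu := \int_{\R} e^{\alpha x}\bar h(x)\,dx < \infty$ through an integral representation of \eqref{heq3}. First I multiply \eqref{heq3} by $e^{\alpha x}$ to put it in conservation-law form
\[
\frac{d}{dx}\bigl( e^{\alpha x}\bar h(x) \bigr) = e^{\alpha x}\bar h^2(x) - e^{\alpha(x-1)}\bar h^2(x-1)\,.
\]
Since $e^{\alpha \cdot}\bar h \in L^1(\R)$ and $\bar h\in C^1(\R)$, I can pick a sequence $a_n \to -\infty$ with $e^{\alpha a_n}\bar h(a_n) \to 0$. Integrating from $a_n$ to $x$ and using that the right-hand side telescopes (the two integrals differ only by boundary terms supported on $[a_n-1,a_n]$ and $[x-1,x]$), I pass to the limit to obtain the key identity
\begin{equation}\label{Ppl:key}
e^{\alpha x}\bar h(x) = \int_{x-1}^{x} e^{\alpha y}\bar h^2(y)\,dy\,,\qquad x\in\R\,.
\end{equation}

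Next, integrating \eqref{Ppl:key} over $\R$ via Fubini's theorem gives $\int_{\R} e^{\alpha y}\bar h^2(y)\,dy = \mu$, so $F(y):=e^{\alpha y}\bar h^2(y)$ lies in $L^1(\R)$ with $\|F\|_{L^1}=\mu$. The identity \eqref{Ppl:key} immediately yields the uniform bound $e^{\alpha x}\bar h(x) \le \mu$, that is $\bar h(x) \le \mu e^{-\alpha x}$ for every $x\in\R$. This already establishes $\bar h\in L^\infty([x_0,\infty))$ for every $x_0\in\R$ (with bound $\mu e^{-\alpha x_0}$), so the remaining and substantial task is to control $\bar h$ as $x\to-\infty$.

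For that, I rewrite \eqref{Ppl:key} in shift form $\bar h(x) = \int_0^1 e^{-\alpha s}\bar h^2(x-s)\,ds$ and compare with the constant $V:=\alpha/(1-e^{-\alpha})$, which satisfies the same identity. Subtracting gives
\[
\bar h(x)-V = \int_0^1 e^{-\alpha s}\bigl(\bar h(x-s)-V\bigr)\bigl(\bar h(x-s)+V\bigr)\,ds\,,
\]
and taking the positive part (noting $\bar h+V \ge 0$) leads to an integral propagation inequality for $(\bar h - V)_+$. The strategy is to argue by contradiction: if $\bar h$ were unbounded as $x\to-\infty$, one extracts $x_n\to -\infty$ with $\bar h(x_n)\to\infty$, and iterating the bound $V\bar h(x) \le (\sup_{[x-1,x]}\bar h)^2$ derived from \eqref{Ppl:key} shows that $\bar h$ must remain large on intervals of increasing length stretching left from $x_n$. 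Combined with $\bar h(x)\le\mu e^{-\alpha x}$ on every such interval, one contradicts the finiteness of $\int_{-\infty}^{x_n} e^{\alpha y}\bar h(y)\,dy$ which must vanish as $n\to\infty$.

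The main obstacle is the third step. The coarse bound $e^{\alpha x}\bar h\le\mu$ from the second paragraph already shows $\bar h$ is locally bounded but permits exponential growth like $e^{-\alpha x}$ at $-\infty$, so to rule this out one must propagate an assumed overshoot $\bar h(x_n)\gg V$ across several unit intervals while keeping quantitative control. Carrying this out cleanly requires exploiting the algebraic cancellation near the unstable equilibrium $\bar h\equiv V$ built into the identity $V = V^2\int_0^1 e^{-\alpha s}\,ds$, rather than using only the sup-bound $\bar h(x)\le V^{-1}(\sup_{[x-1,x]}\bar h)^2$.
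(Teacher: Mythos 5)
Your derivation of the key identity in Step~1 has a gap. Integrating the conservation-law form from $a_n$ to $x$ gives
\[
e^{\alpha x}\bar h(x)-e^{\alpha a_n}\bar h(a_n)=\int_{x-1}^{x}e^{\alpha y}\bar h^2(y)\,dy-\int_{a_n-1}^{a_n}e^{\alpha y}\bar h^2(y)\,dy\,,
\]
so passing to the limit $a_n\to-\infty$ requires $\int_{a_n-1}^{a_n}e^{\alpha y}\bar h^2(y)\,dy\to 0$. But you have no a priori control of $e^{\alpha\cdot}\bar h^2$ near $-\infty$: the hypothesis only puts $e^{\alpha\cdot}\bar h$ in $L^1$, and bounding $\bar h^2$ by $\bar h$ there would require $\bar h$ to be bounded, which is precisely what the lemma asserts. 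Writing $e^{\alpha x}\bar h(x)=\int_{x-1}^{x}e^{\alpha y}\bar h^2(y)\,dy+J$ for some constant $J$, your computation at $-\infty$ only yields $\int_{a_n-1}^{a_n}e^{\alpha y}\bar h^2(y)\,dy\to -J\ge 0$, i.e.\ the easy inequality $J\le 0$. The paper obtains the hard inequality $J\ge 0$ by choosing $x_n\to+\infty$ with $\bar h(x_n-1)\le 1/2$ and using the pointwise differential inequality $\partial_x\bar h\le\bar h^2$ from \eqref{heq3} to prove $\bar h\le 1$ on $[x_n-1,x_n]$, hence $\bar h^2\le\bar h$ there and the boundary term is dominated by the mass tail. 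That inequality propagates smallness to the \emph{right}, which is why the argument must be run at $+\infty$.

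Your Step~3 is, as you concede, not carried out, and the underlying plan has a directionality obstruction in addition to the quantitative one you flag. The identity reconstructs $\bar h(x)$ from values on $[x-1,x]$, so a large value at $x_0$ only says $\bar h^2$ carries weight somewhere in $[x_0-1,x_0]$, possibly concentrated near $x_0$; nothing forces the overshoot to stretch far to the left, whereas the paper's dichotomy on which half of $[x_0-1,x_0]$ carries the mass shows immediately that $\bar h\ge M/(2e^{\alpha/2})$ on a half-unit interval just \emph{ahead} and that $\bar h(x_0+1/2)\ge M$, and so propagates to the right. Moreover, a contradiction built at $-\infty$ is intrinsically weaker: if $\bar h\ge c$ on $[x_n-L_n,x_n]$ with $x_n\to-\infty$, then $\int_{x_n-L_n}^{x_n}e^{\alpha y}\bar h(y)\,dy\ge c(e^{\alpha x_n}-e^{\alpha(x_n-L_n)})/\alpha\to 0$, so mere largeness gives no conflict with integrability; you would need the much sharper statement $\bar h(y)\gtrsim e^{-\alpha y}$ across growing intervals. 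In contrast, the paper propagates the overshoot to the right, where the weight $e^{\alpha x}$ diverges and the lower bound $\bar h\ge M/(2e^{\alpha/2})$ on the intervals $I_k$ contradicts $\int e^{\alpha x}\bar h\,dx=1$ at once. Both Step~1 and Step~3 would need to be redirected to $+\infty$ for the strategy to close.
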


\begin{proof}
 {\it Step 1:} We first claim that the  function $\bar h$ satisfies
\begin{equation}\label{reg1}
e^{\alpha x} \bar h(x) =  \int_{x-1}^x {\bar h}^2(t) e^{\alpha t}\,dt \qquad \mbox{ for all } x \in \R\,.
\end{equation}
To prove \eqref{reg1}, we first multiply \eqref{heq3} by $e^{\alpha x}$ and integrate to find
\begin{equation}\label{reg2}
e^{\alpha x} \bar h(x) =  \int_{x-1}^x {\bar h}^2(t) e^{\alpha t}\,dt +J 
\end{equation}
 for some $J \in \R$. In particular, $e^{\alpha x} \bar{h}(x) \ge J$ and the integrability of $x\mapsto e^{\alpha x} \bar h(x)$ implies that $J\le 0$. 
 
It next follows from the integrability of $x\mapsto e^{\alpha x} \bar h(x)$ that there is a sequence $(x_n)_{n\ge 1}$ in $(1,\infty)$ such that 
\begin{equation}
\lim_{n\to\infty} x_n = \infty \;\;\mbox{ and }\;\; \bar h(x_n - 1) \le e^{\alpha (x_n-1)} \bar h(x_n - 1) \le \frac{1}{2}\,, \quad n\ge 1\,. \label{phl100}
\end{equation}
We are going to show that the differential equation \eqref{heq3}  for $\bar h$ guarantees that $\bar h$ does not exceed one in $(x_n-1,x_n)$. Indeed, by \eqref{heq3}, $\partial_x \bar h \leq {\bar h}^2$ and integrating this equation gives, thanks to \eqref{phl100}, 
$$
2 - \frac{1}{\bar h(x)} \le \frac{1}{\bar h(x_n-1)} - \frac{1}{\bar h(x)} \le x - x_n + 1 \le 1\,, \quad x\in [x_n-1,x_n]\,,
$$
hence
\begin{equation}
\bar h(x) \le 1 \,, \quad x\in [x_n-1,x_n]\,.\label{phl101}
\end{equation}
Combining \eqref{reg2} (with $x=x_n$) and \eqref{phl101} gives
$$
0 \le - J = \int_{x_n-1}^{x_n} \bar h^2(y) e^{\alpha y}\,dy - e^{\alpha x_n} \bar h(x_n) \le \int_{x_n-1}^{x_n} \bar h(y) e^{\alpha y}\,dy\,.
$$
Since the right-hand side of the above inequality converges to zero as $n\to\infty$, we conclude that $J=0$ which proves \eqref{reg1}.

{\it Step 2:} To complete the proof we argue by contradiction and assume that there is $M\ge 8 e^{2\alpha}$ and $x_0\in\R$ such that
\begin{equation}
\bar{h}(x_0) \ge M\,. \label{phl102}
\end{equation}
According to \eqref{reg1} we have either
\begin{equation}\label{reg7}
 \int_{x_0-1/2}^{x_0} {\bar h}^2(y) e^{\alpha y}\,dy \geq \frac{\bar h(x_0) e^{\alpha x_0}}{2}
\end{equation}
or
\begin{equation}\label{reg8}
 \int_{x_0-1}^{x_0-1/2} {\bar h}^2(y) e^{\alpha y}\,dy \geq \frac{\bar h(x_0) e^{\alpha x_0}}{2}\,.
\end{equation}
In case that \eqref{reg7} holds, consider $y \in [x_0, x_0+1/2]$. Then $(x_0-1/2,x_0)\subset (y-1,y)$ and we deduce from \eqref{reg1}, \eqref{phl102}, and \eqref{reg7} that 
$$
\bar h(y) =  e^{-\alpha y} \int_{y-1}^y {\bar h}^2(z) e^{\alpha z}\,dz \geq  e^{-\alpha y} \int_{x_0-1/2}^{x_0}{\bar h}^2(z) e^{\alpha z}\,dz  \geq
 \frac{M}{2e^{\alpha/2}}\,.
$$
Combining this lower bound with \eqref{reg1} gives
$$
\bar h\left( x_0+\frac{1}{2} \right) \ge e^{-\alpha(x_0+1/2)} \int_{x_0}^{x_0+1/2} \bar{h}^2(y) e^{\alpha y}\, dy \ge \frac{M^2}{8 e^{2\alpha}} \ge  M \,.
$$
Similarly, if \eqref{reg8} holds, then $(x_0-1,x_0-1/2)\subset (y-1,y)$ for $y \in [x_0-1/2,x_0]$ and it follows from \eqref{reg1}, \eqref{phl102}, and \eqref{reg8} that $\bar h(y) \geq M/(2 e^{\alpha/2})$. In particular, we find
$$
\bar h\left( x_0+\frac{1}{2} \right) \ge e^{-\alpha(x_0+1/2)} \int_{x_0-1/2}^{x_0} {\bar h}^2(z)e^{\alpha z}\,dz \ge \frac{M^2 }{8 e^{2\alpha}} \ge M\,.
$$
We have thus proved that
\begin{equation}\label{reg9}
\bar h\left( x_0+\frac{1}{2} \right) \ge M\,.
\end{equation}
Arguing by induction, we actually conclude that
\begin{equation}\label{reg10}
\bar h\left( x_0+\frac{k}{2} \right) \ge M\,, \qquad k\in\N\,.
\end{equation}

Furthermore, arguing as in the proof of \eqref{reg9}, we infer from \eqref{reg1} and \eqref{reg10} that 
$$
\bar h(y) \ge \frac{M}{2e^{\alpha/2}}\,, \qquad y\in I_k\,,
$$
where either $I_k := (x_0+(k-1)/2,x_0+k/2)$ or $I_k := (x_0+k/2,x_0+(k+1)/2)$. Define
$$
\mathcal{L} := \left\{ k\in\N\ :\ I_k = \left( x_0 + \frac{k-1}{2} , x_0 + \frac{k}{2} \right) \right\}
$$
and
$$
\mathcal{R} := \left\{ k\in\N\ :\ I_k = \left( x_0 + \frac{k}{2} , x_0 + \frac{k+1}{2} \right) \right\}\,.
$$
Then $\mathcal{L} \cup \mathcal{R} = \N$ and 
\begin{align*}
1 = \int_{\R} e^{\alpha x} \bar{h}(x)\,dx & \ge \frac{1}{2} \sum_{k\in\N} \int_{x_0+(k-1)/2}^{x_0+k/2} e^{\alpha x} \bar{h}(x)\,dx + \frac{1}{2} \sum_{k\in\N} \int_{x_0+k/2}^{x_0+(k+1)/2} e^{\alpha x} \bar{h}(x)\,dx \\
& \ge \frac{1}{2} \sum_{k\in\mathcal{L}} \int_{I_k} e^{\alpha x} \bar{h}(x)\,dx + \frac{1}{2} \sum_{k\in\mathcal{R}} \int_{I_k} e^{\alpha x} \bar{h}(x)\,dx \\
& \ge \frac{M}{8e^{\alpha/2}} \left[ \sum_{k\in\mathcal{L}} e^{\alpha (x_0+(k-1)/2)} + \sum_{k\in\mathcal{L}} e^{\alpha (x_0+k/2)} \right]\,.
\end{align*}
Since $\mathcal{L}$ or $\mathcal{R}$ is infinite, the right-hand side of the above inequality cannot be bounded, which gives a contradiction and hence proves the statement of the lemma.
\end{proof}

\bigskip

\begin{proof}[Proof of Corollary~\ref{cor2}]
Let $\tilde h \in C^1(\R)\cap L^1(\R;e^{\alpha x}\,dx)$ be a nonnegative solution to \eqref{heq3}. By Lemma~\ref{L.regularity} we have $\tilde h \in L^{\infty}(\R)$. Then $\tilde h$ is also a solution to the evolution equation \eqref{heq1} and, by Theorem~\ref{thm1},
$$
\lim_{t\to\infty} \int_{\R} e^{\alpha x} \Big| \tilde h(x) - \bar h \big( x - \tilde{\mu}(t,x)\big) \Big| = 0\,,
$$
where
$$
\tilde{\mu}(t,x) := \frac{\ln\left( \tilde{m}_0(\Theta(t,x)) \right)}{\alpha}\,, \qquad (t,x)\in [0,\infty)\times\R\,,
$$
with $\Theta$ defined in Definition~\ref{theta} and
$$
\tilde{m}_0(\theta) := \sum_{k\in\Z} e^{\alpha(k+\theta)} \tilde{h}(k+\theta)\,, \qquad \theta\in [0,1)\,.
$$
Arguing as in the proof of \eqref{phl1}, there exists $\tilde{\nu}\in [0,\infty)$ such that $\tilde{m}_0(\theta)=\tilde{\nu}$ for a.e. $\theta\in (0,1)$, that is, $\tilde{\mu}$ is a constant and the claim is proved.
\end{proof}

  \bigskip
  {\bf Acknowledgment.}
  The authors gratefully acknowledge support through the CRC {\it The mathematics of emergent effects} at the University of Bonn which is funded by
  the German Science Foundation (DFG).

\bigskip

{\small
\bibliographystyle{alpha}%

}

\end{document}